\newtheorem{theorem}{Theorem}[section]
\newtheorem{corollary}[theorem]{Corollary}
\newtheorem{lemma}[theorem]{Lemma}
\newtheorem{example}[theorem]{Example}
\newtheorem{proposition}[theorem]{Proposition}
\newtheorem{definition}[theorem]{Definition}
\newtheorem{remark}[theorem]{Remark}
\begin{document}

\title{Finite generation and holomorphic anomaly equation for equivariant Gromov-Witten invariants of $K_{\mathbb{P}^1\times\mathbb{P}^1}$ }
\author{Xin Wang}
\address{Department of Mathematics\\ Shandong University \\ Jinan, China}

\email{xinwmath@gmail.com}
\begin{abstract}
In this paper,
we prove finite generation property and holomorphic anomaly equation for the equivariant Gromov-Witten  theory of  $K_{\mathbb{P}^1\times\mathbb{P}^1}$. \end{abstract}

\maketitle
\tableofcontents

\allowdisplaybreaks
\section{Introduction}
In the past years, many works has been done about the finite generation and holomorphic anomaly equation  for (non-)compact Calabi-Yau 3-fold and also twisted theory of Calabi-Yau type (c.f. \cite{coates2018gromov}, \cite{fang2019open}, \cite{guo2018structure}, \cite{lhogromov}, \cite{lho2019gromov},
\cite{lho2018stable},
\cite{wang2019quasi}).
Most of the examples studied are about the models of one K{\"a}hler parameter.
In this paper, we study a simple example of non-compact Calabi-Yau threefold with two K{\"a}hler parameter
$K_{\mathbb{P}^1\times\mathbb{P}^1}$. We remark that  this example has been studied in \cite{fang2019open}, \cite{lhogromov} and \cite{lho2019gromov} from different perspectives.

Let $\overline{M}_{g}(\mathbb{P}^1\times\mathbb{P}^1,(d_1,d_2))$ be the moduli space of stable maps to $\mathbb{P}^1\times\mathbb{P}^1$ with genus-$g$, degree $(d_1,d_2)$. Consider the standard linear torus $(\mathbb{C}^*)^4$ action on $\mathbb{P}^1\times\mathbb{P}^1$ with weights $\lambda_0,\lambda_1,\mu_0,\mu_1$. There is a natual lift of the torus action on the moduli space of stable maps  $\overline{M}_{g}(\mathbb{P}^1\times\mathbb{P}^1,(d_1,d_2))$.
The genus-$g$, generating function of  equivariant
Gromov-Witten invariants of $K_{\mathbb{P}^1\times\mathbb{P}^1}$ at point $\tau\in H^*(\mathbb{P}^1\times\mathbb{P}^1;\mathbb{Q})$ is defined by
\[F_g(\tau,q_1,q_2)
=\sum_{d_1,d_2\geq0}\sum_{n\geq0}{q_1}^{d_1}{q_2}^{d_2}\frac{1}{n!}
\int_{\left[\overline{M}_{g,n}(\mathbb{P}^1\times\mathbb{P}^1,(d_1,d_2))\right]^{\text{vir}}_{(\mathbb{C}^*)^4}}\left(\prod_{i=1}^{n}ev_i^*\tau\right) \cdot e_{(\mathbb{C}^*)^4}\left(R^1\pi_*f^*\mathcal{O}(-2,-2)\right)\]
where $\left[\overline{M}_{g,n}(\mathbb{P}^1\times\mathbb{P}^1,(d_1,d_2))\right]^{\text{vir}}_{(\mathbb{C}^*)^4}$ is the equivariant virtual fundamental class (c.f. \cite{li1998virtual}) and $R^1\pi_*f^*\mathcal{O}(-2,-2)$ is the associated obstruction bundle   of Gromov-Witten invariants of $K_{\mathbb{P}^1\times\mathbb{P}^1}$. For simplicity, in the below, we always choose the specialization of the equivariant parameters $\lambda_0=-\lambda_1=\lambda$, $\mu_0=-\mu_1=\mu$. So formally, the genus $g$ generating function $F_g$  satisfies
\[F_g(\tau,q_1,q_2)\in \mathbb{Q}[[\lambda, \mu]][[q_1,q_2]].\]

To state the finite generation property and holomorphic anomaly equation precisely,
firstly, we consider $\left\{\left(M_{\alpha\beta}(q_1,q_2),L_{\alpha\beta}(q_1,q_2)\right): \alpha, \beta\in\{0,1\}\right\}$ which are solutions of the equations
\[\left\{
\begin{aligned}
M^2-\lambda^2& = & q_1(2(M+L))^2 \\
L^2-\mu^2& = & q_2(2(M+L))^2.
\end{aligned}
\right.\]
Using these solutions, it is easy to define a $\mathbb{Q}$ vector space
\[\mathbb{G}_{m,n}=
\left\{\frac{S}{T}: S\in \mathcal{P}_{n}, T\in \mathcal{Q}_{m} \right\}\]
where
\begin{align*}\mathcal{P}_n=&\left\{\text{homogeneous polynomials of} \{L_{\alpha\beta},M_{\alpha\beta},\lambda,\mu:\alpha,\beta\in\{0,1\}\}\ \text{with degree}\ n\right\},
\\
\mathcal{Q}_m=&\left\{\text{monomials of}\ \{\lambda^2 L_{\alpha\beta}+\mu^2 M_{\alpha\beta}:\alpha,\beta\in\{0,1\}\}\ \text{with degree} \ m\right\}.\end{align*}
Then
 we define a finite generated ring
\[\mathbb{G}:=\bigoplus_{k\geq0}\mathbb{G}_{k,3k}.\]
Secondly, we consider the $I$ function of the equivariant Gromov-Witten invariants of $K_{\mathbb{P}^1\times\mathbb{P}^1}$
\begin{align*}
&I(q_1,q_2,z)=z\sum_{d_1,d_2=0}^{\infty}{q_1}^{d_1}{q_2}^{d_2}
\frac{\prod_{k=0}^{2d_1+2d_2-1}(-2H_1-2H_2-kz)}{\prod_{k_1=1}^{d_1}((H_1+k_1z)^2-\lambda^2)
\prod_{k_2=1}^{d_2}((H_2+k_2z)^2-\mu^2)}
=z\left(I_0+\frac{I_1}{z}+\frac{I_2}{z^2}+...\right)
\end{align*}
with $I_0=1$,
$I_1=(2H_1+2H_2)\sum_{d_1,d_2=0,(d_1,d_2)\neq0}^{\infty}
{q_1}^{d_1}{q_2}^{d_2}
\frac{(2d_1+2d_2-1)!}
{(d_1!)^2(d_2!)^2}
.$
 The mirror point is defined as $\tau(q_1,q_2):=\frac{I_1}{I_0}$. From the $I$ function, we define a generator
\[X(q_1,q_2)
:=\left(q_1\frac{d}{dq_1}+q_2\frac{d}{dq_2}\right)\ln\left(1+I_{11}^{2}(q_1,q_2)+I_{11}^{2}(q_2,q_1)\right).\]
where $I_{11}^2(q_1,q_2)=q_1\frac{d}{dq_1}\sum_{d_1,d_2=0,(d_1,d_2)\neq0}^{\infty}
{q_1}^{d_1}{q_2}^{d_2}
\frac{(2d_1+2d_2-1)!}
{(d_1!)^2(d_2!)^2}$.
Thirdly,  we should define another four auxiliary generators \[P_1(q_1,q_2),\ P_2(q_1,q_2),\ P_3(q_1,q_2),\ P_4(q_1,q_2)\]
whose explicit formula are given in Section~\ref{sec:finit-gen}.

Motivated by mirror symmetry, closed formulas and nice properties of $F_g$ are expected to obtain at the mirror point $\tau=\tau(q_1,q_2)$.
Our main theorem is
\begin{theorem}
For $g\geq2$, the genus-$g$ generating function $F_g$ of  equivariant Gromov-Witten invariants of $K_{\mathbb{P}^1\times\mathbb{P}^1}$ satisfies
\begin{itemize}
    \item{
     $F_g$ lies in a finitely generated ring
\[F_g(\tau(q_1,q_2))
\in \mathbb{G}\left[P_1,P_2,P_3,P_4,X\right]\]
Moreover, the degree of $X$ in the polynomial expression of $F_g$ is at most $3g-3$.
}
\item{Holomorphic anomaly equation
\begin{footnotesize}
\begin{align*}
&\frac{d}{dX}F_g(\tau(q_1,q_2))
\\=&-\frac{1}{2\left(\tilde{I}_{22}(q_1,q_2)+\tilde{I}_{22}(q_2,q_1)\right)}
\left(\sum_{g_1+g_2=g}\langle\langle
H_1+H_2\rangle\rangle_{g_1,1}\langle\langle
H_1+H_2\rangle\rangle_{g_2,1}
+\langle\langle
H_1+H_2,H_1+H_2\rangle\rangle_{g-1,2}\right)
\end{align*}
\end{footnotesize}
}
\end{itemize}
where  the definition of $\tilde{I}_{22}(q_1,q_2)$ can be found in Lemma~\ref{lem:Rrecursion}.
\end{theorem}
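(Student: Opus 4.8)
The plan is to prove both statements simultaneously through the Givental--Teleman reconstruction of the equivariant cohomological field theory attached to $K_{\mathbb{P}^1\times\mathbb{P}^1}$. Since the equivariant quantum cohomology of $\mathbb{P}^1\times\mathbb{P}^1$ twisted by the Euler class of $\mathcal{O}(-2,-2)$ is semisimple away from the walls (the four torus fixed points give four idempotents, recorded by the canonical data $M_{\alpha\beta},L_{\alpha\beta}$ solving the stated quadratic system), the higher genus potential $F_g$ for $g\geq 2$ is reconstructed from the genus-$0$ data. Concretely, I would write $F_g(\tau(q_1,q_2))$ as a finite sum over stable graphs $\Gamma$ of genus $g$, in which each vertex $v$ contributes a Hodge/$\kappa$-class integral over $\overline{M}_{g_v,n_v}$, each leg carries the fundamental solution ($S$-matrix) applied to the insertion, and each edge carries the propagator bivector built from the $R$-matrix that conjugates the trivial CohFT to ours at the mirror point $\tau=\tau(q_1,q_2)$. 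The $I$-function supplied above furnishes, through the mirror map $\tau=I_1/I_0$, exactly the genus-$0$ input needed to extract the $S$- and $R$-matrices.

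The technical core is the explicit identification of these building blocks inside the ring $\mathbb{G}[P_1,P_2,P_3,P_4,X]$. First I would compute the asymptotic expansion of the $R$-matrix from the quantum differential equation at the mirror point and show that its entries are rational in $M_{\alpha\beta},L_{\alpha\beta},\lambda,\mu$ with denominators lying in $\mathcal{Q}_m$, hence belong to $\mathbb{G}$ together with the auxiliary generators $P_1,\dots,P_4$ arising from its subleading coefficients. The decisive point is that the \emph{propagator} attached to each edge depends on the non-holomorphic data only through the single generator $X$, and does so \emph{affinely}: $\frac{d}{dX}$ of the propagator is a rank-one tensor proportional to $(H_1+H_2)\otimes(H_1+H_2)$, with constant of proportionality $-\tfrac{1}{2(\tilde I_{22}(q_1,q_2)+\tilde I_{22}(q_2,q_1))}$ read off from Lemma~\ref{lem:Rrecursion}. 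Establishing this affine, rank-one dependence --- equivalently, that $X$ is the unique transcendental direction among all $q$-derivatives of the generators and that it aligns with the anticanonical (BCOV) direction --- is the main obstacle and the part genuinely requiring the recursion of Lemma~\ref{lem:Rrecursion}.

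Granting these identifications, finite generation is immediate: every vertex and leg factor lies in $\mathbb{G}[P_1,P_2,P_3,P_4]$ since it is $X$-independent, while each edge contributes at most one power of $X$. A stable graph of genus $g$ with no markings has at most $3g-3$ edges (the extremal trivalent genus-$0$-vertex graphs satisfy $3V=2E$ and $E-V+1=g$, forcing $E=3g-3$), so $F_g(\tau(q_1,q_2))$ is a polynomial in $X$ over $\mathbb{G}[P_1,\dots,P_4]$ of degree at most $3g-3$, as claimed.

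Finally, for the holomorphic anomaly equation I would differentiate the graph sum with respect to $X$. Because $X$ enters only through the edge propagators and enters affinely, $\frac{d}{dX}$ acts by the Leibniz rule as a sum over the edges of $\Gamma$, each term replacing one propagator by the rank-one tensor $(H_1+H_2)\otimes(H_1+H_2)$ and multiplying by the universal constant above. Cutting an edge of a connected genus-$g$ graph produces two possibilities: either the graph disconnects into genus-$g_1$ and genus-$g_2$ components, contributing $\sum_{g_1+g_2=g}\langle\langle H_1+H_2\rangle\rangle_{g_1,1}\langle\langle H_1+H_2\rangle\rangle_{g_2,1}$, or it stays connected while its genus drops by one and two new legs appear, contributing $\langle\langle H_1+H_2,H_1+H_2\rangle\rangle_{g-1,2}$. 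Re-summing both cases over all graphs and pulling out the common factor $-\tfrac{1}{2(\tilde I_{22}(q_1,q_2)+\tilde I_{22}(q_2,q_1))}$ yields precisely the stated equation.
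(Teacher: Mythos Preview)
Your proposal is correct and follows essentially the same route as the paper: Givental--Teleman graph sum, placement of the $R$-matrix entries and hence vertex/tail/edge contributions in $\mathbb{G}[P_1,\dots,P_4,X]$ with $X$ appearing at most affinely per edge (this is exactly the content of the paper's Corollary~\ref{cor:R-ring} and the Proposition preceding Theorem~\ref{thm:finit-gene}), the bound $|E(\Gamma)|\le 3g-3$, and the Leibniz/edge-cutting derivation of the anomaly equation. One small bookkeeping correction: the $X$-derivative of the propagator itself carries the constant $-\tfrac{1}{\tilde I_{22}(q_1,q_2)+\tilde I_{22}(q_2,q_1)}$ (not half that), and the extra factor of $\tfrac12$ in the final equation comes from the combinatorics of re-summing the cut graphs, so be careful to track that when you write out the details.
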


\begin{remark}
Our computations can also be applied to many other examples of two K{\"a}hler parameters, such as twisted Gromov-Witten  theory over the product of  projective  spaces, local Hirzebruch surfaces and so on. We will deal with these examples in the future.
\end{remark}
 This paper is organised as follows:
In section~\ref{sec:g=0GW} and section~\ref{sec:R}, we focus on the genus-0 twisted equivariant Gromov-Witten theory of  $K_{\mathbb{P}^1\times\mathbb{P}^1}$ and the computation of Givental  $R$ matrix. In section~\ref{sec:finit-gen} and section~\ref{sec:HAE} , we prove the finite generation  property and holomorphic anomaly equation for  Gromov-Witten theory of $K_{\mathbb{P}^1\times\mathbb{P}^1}$. In section~\ref{sec:osc-int}, we discuss the oscillatory integral and  Feymann diagram representation of Givental $R$ matrix.
\\{\bf  Acknowledgements.}
The authors would like to special thank professor Shuai Guo and Felix Janda for numerous discussing Givental theory and Calabi-Yau geometry. The author is partially supported by  NSFC grant 11601279.

\section{Genus-0 equivariant Gromov-Witten theory of $K_{\mathbb{P}^1\times\mathbb{P}^1}$}\label{sec:g=0GW}
\subsection{$(\mathbb{C}^*)^4$ equi-variant theory of $\mathbb{P}^1\times\mathbb{P}^1$ twisted by $\mathcal{O}(-2,-2)$}
First we consider the $(\mathbb{C}^*)^4$ acts on $\mathbb{P}^1\times\mathbb{P}^1$ by
\[(t_0,t_1,s_0,s_1)\cdot ([x_0,x_1],[y_0,y_1]):=([t_0^{-1}x_0,{t_1}^{-1}x_1],[{s_0}^{-1}y_0,{s_1}^{-1}y_1])\]
then the equivariant cohomlogy ring  is
\[H_{(\mathbb{C}^*)^4}^*(\mathbb{P}^1\times\mathbb{P}^1,\mathbb{Q})
=\frac{\mathbb{Q}[\lambda_0,\lambda_1][H_1]}{\langle (H_1-\lambda_0)(H_1-\lambda_1)\rangle}
\otimes_{\mathbb{Q}}\frac{\mathbb{Q}[\mu_0,\mu_1][H_2]}{\langle (H_2-\mu_0)(H_2-\mu_1)\rangle}\]
where $\lambda_i, \mu_j$ are the corresponding equivariant parameter of the torus action, $H_1$ and $H_2$ are the hyperplane classes from the first and second copy of $\mathbb{P}^1$. This torus action can be natually lifted to the canonical bundle $K_{\mathbb{P}^1\times \mathbb{P}^1}=\mathcal{O}(-2,-2)$.  Then we consider the twisted paring on $H_{(\mathbb{C}^*)^4}^*(\mathbb{P}^1\times\mathbb{P}^1;\mathbb{Q})$
\[\langle v_1,v_2\rangle^{tw}=\int_{\mathbb{P}^1\times\mathbb{P}^1}v_1v_2\frac{1}{-2H_1-2H_2}\]
For simplicity, from now on, we take the specialization of the equivariant parameters $\lambda_0=-\lambda_1=\lambda,\ \mu_0=-\mu_1=\mu$, then under the natural basis
$\{1,H_1,H_2,H_1\cdot H_2\}$, the paring matrix  is \begin{align}\label{paring-matrix}
\eta=\frac{1}{2}
\frac{1}{\lambda^2-\mu^2}
\begin{pmatrix}
0&1&-1&0\\
1&0&0&-\lambda^2\\
-1&0&0&\mu^2\\
0&-\lambda^2&\mu^2&0
\end{pmatrix}
\end{align}
For generic equivariant parameters $\lambda$ and $\mu$, the algebra $H^*_{(\mathbb{C}^*)^4}(\mathbb{P}^1\times\mathbb{P}^1;\mathbb{Q})$ is semisimple. It has a canonical basis
\[\mathfrak{e}_{\alpha\beta}=
\frac{1}{4}\left(
1+\frac{H_1}{(-1)^\alpha\lambda}
+\frac{H_2}{(-1)^\beta\mu}+\frac{H_1H_2}{(-1)^{\alpha+\beta}\lambda\mu}\right)\]
which is the idempotent of the semisimple algebra.
\[\sum_{\alpha,\beta}\mathfrak{e}_{\alpha\beta}=1,\quad \mathfrak{e}_{\alpha\beta}\cdot\mathfrak{e}_{\gamma\theta}=\delta_{\alpha\beta}^{\gamma\theta}\mathfrak{e}_{\alpha\beta}.\]
It is easy to show
\[H_1\cdot \mathfrak{e}_{\alpha\beta}=(-1)^\alpha\lambda\mathfrak{e}_{\alpha\beta},\quad H_2\cdot \mathfrak{e}_{\alpha\beta}=(-1)^\beta\mu\mathfrak{e}_{\alpha\beta}.\]

\subsection{Twisted $I$ fucntion}
In this section, we consider a geometric family of elements of the Lagrangian  cone, which is called $I$ function. Using quantum Riemann-Roch theorem (c.f. \cite{coates2009computing} and
\cite{coates2007quantum}), we can obtain the $I$ function of the twisted theory of $\mathcal{O}(-2,-2)$ over $\mathbb{P}^1\times\mathbb{P}^1$ as follows.
\begin{align*}
&I(q_1,q_2,z)=z\sum_{d_1,d_2=0}^{\infty}{q_1}^{d_1}{q_2}^{d_2}
\frac{\prod_{k=0}^{2d_1+2d_2-1}(-2H_1-2H_2-kz)}{\prod_{k_1=1}^{d_1}((H_1+k_1z)^2-\lambda^2)
\prod_{k_2=1}^{d_2}((H_2+k_2z)^2-\mu^2)}
=z\left(I_0+\frac{I_1}{z}+\frac{I_2}{z^2}+...\right)
\end{align*}
with $I_0=1$,
$I_1=(2H_1+2H_2)\sum_{d_1,d_2=0,(d_1,d_2)\neq0}^{\infty}
{q_1}^{d_1}{q_2}^{d_2}
\frac{(2d_1+2d_2-1)!}
{(d_1!)^2(d_2!)^2}
.$
 A very important property of the $I$ function is: it is solutions of the Picard-Fuchs equations
 \begin{align}
 \label{eq:PF}\left\{\begin{aligned}
\left({D_{H_1}}^2-\lambda^2
-q_1\left(2D_{H_1}+2D_{H_2}
\right)\left(2D_{H_1}+2D_{H_2}+z
\right)\right)I=0\\
\left({D_{H_2}}^2-\mu^2
-q_2\left(2D_{H_1}+2D_{H_2}
\right)\left(2D_{H_1}+2D_{H_2}+z
\right)\right)I=0
\end{aligned}\right.\end{align}
where $D_{H_i}=H_i+zq_i\frac{d}{dq_i}$ for $i=1,2$.

\subsection{Quantum differential equation}
The quantum differential equation is of the form $dS(t,q_1,q_2)=dt*_t S(t,q_1,q_2)$. At the mirror point $t=\tau(q_1,q_2)$, the quantum differential equation becomes:
\begin{scriptsize}
\begin{align}
\label{eq:QDE-S}\left\{\begin{aligned}
&D_{H_1}\left(S(\tau(q_1,q_2),q_1,q_2,z)^*(1,H_1,H_2,H_1H_2)\right)=S(\tau(q_1,q_2),q_1,q_2,z)^*\left((H_1+q_1\frac{d\tau(q_1,q_2)}{dq_1})*_{\tau(q_1,q_2)}(1,H_1,H_2,H_1H_2)\right)
\\&D_{H_2}\left(S(\tau(q_1,q_2),q_1,q_2,z)^*(1,H_1,H_2,H_1H_2)\right)=S(\tau(q_1,q_2),q_1,q_2,z)^*\left((H_2+q_2\frac{d\tau(q_1,q_2)}{dq_2})*_{\tau(q_1,q_2)}(1,H_1,H_2,H_1H_2)\right)
\end{aligned}\right.
\end{align}\end{scriptsize}
Assume
\[(H_1+q_1\frac{d\tau(q_1,q_2)}{dq_1})*_{\tau(q_1,q_2)}(1,H_1,H_2,H_1H_2)=(1,H_1,H_2,H_1H_2)A_1\]
and
\[(H_2+q_2\frac{d\tau(q_1,q_2)}{dq_2})*_{\tau(q_1,q_2)}(1,H_1,H_2,H_1H_2)=(1,H_1,H_2,H_1H_2)A_2.\]

Via Birkhoff factorization (c.f. \cite{coates2007quantum}), we obtain
\begin{lemma} The quantum differential matrix
\begin{align*}
&A_1
=\begin{tiny}
\begin{pmatrix}
0&\lambda^2 I_{22a}^{1;\lambda^2}(q_1,q_2)+\mu^2 I_{22a}^{1;\mu^2}(q_1,q_2)&\lambda^2 I_{22a}^{2;\lambda^2}(q_1,q_2)+\mu^2 I_{22a}^{2;\mu^2}(q_1,q_2)&0\\
1+I^2_{11}(q_1,q_2)&0&0&\lambda^2 I_{33a}^{1;\lambda^2}(q_1,q_2)+\mu^2 I_{33a}^{1;\mu^2}(q_1,q_2)\\
I^2_{11}(q_1,q_2)&0&0&\lambda^2 I_{33a}^{2;\lambda^2}(q_1,q_2)+\mu^2 I_{33a}^{2;\mu^2}(q_1,q_2)\\
0&I_{22}^{1}(q_1,q_2)&I_{22}^{2}(q_1,q_2)&0
\end{pmatrix}
\end{tiny}
\end{align*}
and
\begin{align*}
&A_2=
\begin{tiny}\begin{pmatrix}
0&\lambda^2 I_{22a}^{2;\mu^2}(q_2,q_1)+\mu^2 I_{22a}^{2;\lambda^2}(q_2,q_1)&\lambda^2 I_{22a}^{1;\mu^2}(q_2,q_1)+\mu^2 I_{22a}^{1;\lambda^2}(q_2,q_1)&0\\
I^2_{11}(q_2,q_1)&0&0&\lambda^2 I_{33a}^{2;\mu^2}(q_2,q_1)+\mu^2 I_{33a}^{2;\lambda^2}(q_2,q_1)\\
1+I^2_{11}(q_2,q_1)&0&0&\lambda^2  I_{33a}^{1;\mu^2}(q_2,q_1)+\mu^2 I_{33a}^{1;\lambda^2}(q_2,q_1)\\
0&I_{22}^{2}(q_2,q_1)&I_{22}^{1}(q_2,q_1)&0
\end{pmatrix}
\end{tiny}
\end{align*}

\end{lemma}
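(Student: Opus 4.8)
The plan is to reconstruct the fundamental solution $S$ and, with it, the quantum-product matrices $A_1,A_2$ directly from the $I$-function, using the Picard-Fuchs equations \eqref{eq:PF} together with a Birkhoff factorization. First I would expand $I=z\bigl(I_0+I_1/z+I_2/z^2+\cdots\bigr)$ in the natural basis $\{1,H_1,H_2,H_1H_2\}$ and record each $z$-coefficient $I_k$ as a vector of power series in $q_1,q_2$; this is exactly the bookkeeping that defines the scalar functions $I_{11}^2$, $I_{22}^j$, $I_{22a}^{j;\lambda^2}$, $I_{33a}^{j;\lambda^2}$ and their $\mu^2$-analogues named in Section~\ref{sec:finit-gen}. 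Since the two operators in \eqref{eq:PF} are of second order in $D_{H_1}$ and $D_{H_2}$, the four functions $\{I,D_{H_1}I,D_{H_2}I,D_{H_1}D_{H_2}I\}$ span the rank-four solution space of the quantum differential equation, and \eqref{eq:PF} lets me rewrite any $D_{H_1}^2I$ or $D_{H_2}^2I$ back in terms of this spanning set.

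Next I would carry out the Birkhoff factorization at the mirror point. Substituting $\tau=\tau(q_1,q_2)=I_1$, the matrix built from $I$ and its derivatives factors as the fundamental solution $S(\tau(q_1,q_2),q_1,q_2,z)$, carrying the nonpositive powers of $z$, times a matrix absorbing the remaining powers. With this in hand, \eqref{eq:QDE-S} says precisely that $D_{H_1}$ (resp.\ $D_{H_2}$) applied to the row $S^*(1,H_1,H_2,H_1H_2)$ equals $S^*$ composed with multiplication by $H_1+q_1\tfrac{d\tau}{dq_1}$ (resp.\ $H_2+q_2\tfrac{d\tau}{dq_2}$). Expanding both sides in $\{1,H_1,H_2,H_1H_2\}$ and matching powers of $z$ isolates the transition matrix, which by definition is $A_1$ (resp.\ $A_2$). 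The checkerboard sparsity of the stated matrices is then immediate from the $\mathbb{Z}/2$-grading: multiplication by the odd-degree classes $H_i+q_i\tfrac{d\tau}{dq_i}$ reverses parity, so it can only connect $\{1,H_1H_2\}$ with $\{H_1,H_2\}$.

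To read off the nonzero entries I would compute the three-point functions $\langle (H_i+q_i\tfrac{d\tau}{dq_i})*\phi_j,\phi_\ell\rangle^{tw}$ from the $I$-function and then raise the last index using the inverse of the twisted pairing \eqref{paring-matrix}; since that pairing carries the equivariant parameters $\lambda^2,\mu^2$, this is exactly what produces the $\lambda^2$- and $\mu^2$-weighted combinations displayed in $A_1$ and $A_2$, while the off-diagonal normalizations such as $1+I_{11}^2$ come from the classical term $H_i*1$ corrected by the derivative of the mirror map. Finally, the involution exchanging the two $\mathbb{P}^1$ factors, which swaps $q_1\leftrightarrow q_2$, $\lambda\leftrightarrow\mu$ and $H_1\leftrightarrow H_2$, carries the computation of $A_1$ onto that of $A_2$; I would use it both to obtain $A_2$ with little extra work and as a consistency check on the entries.

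The hard part will be the Birkhoff bookkeeping: cleanly separating the gauge freedom hidden in the positive powers of $z$ (absorbed into the normalization of $S$) from the genuine quantum-product data, and verifying that $D_{H_i}$ really closes the four-dimensional solution space into exactly the displayed four-by-four form. Concretely this reduces to matching a moderate number of $q_1,q_2$-series coefficients, where the reductions furnished by the Picard-Fuchs equations \eqref{eq:PF} are the indispensable tool.
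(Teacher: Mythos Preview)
Your proposal is correct and follows the same Birkhoff-factorization strategy as the paper. The paper's execution is slightly more direct than what you outline: rather than computing three-point functions and raising an index with the inverse of the twisted pairing~\eqref{paring-matrix}, it simply applies $D_{H_i}$ to $S^*1=I$, reads off the coefficients of $S^*H_1$ and $S^*H_2$ (this gives the first column of $A_i$ and simultaneously solves for $S^*H_j$), then applies $D_{H_i}$ again to $S^*H_1$, $S^*H_2$, and finally to $S^*(H_1H_2)$. At each step the allowed basis elements on the right-hand side are pinned down by a homogeneity count---$[z^{-k}]S^*\phi$ is a degree-$(k+\deg\phi)$ polynomial in $H_1,H_2,\lambda,\mu$---which is exactly your parity argument in sharper form and explains both the checkerboard zeros and the $\lambda^2,\mu^2$ structure of the nonzero entries without ever invoking the pairing. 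The pairing matrix is used only later (Lemma~\ref{lem:linear-rel-entr}) to derive linear relations among the entries. Your use of the $(q_1,q_2,\lambda,\mu,H_1,H_2)$ involution to obtain $A_2$ from $A_1$ is implicit in the paper's presentation of $A_2$ and is a perfectly good shortcut.
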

\begin{proof}
First, we notice that for every $k\geq0$,
\[I_k=I_{k}(q_1,q_2,H_1,H_2,\lambda^2,\mu^2)\] is
a homogenous polynomial of $H_1,H_2,\lambda,\mu$ with degree $k$ and the coefficients are hyper-geometric series of $q_1$ and $q_2$.

It is well known that the differential operators $-zq_1\frac{d}{dq_1}+H_1$ and $-zq_2\frac{d}{dq_2}+H_2$ preserves the tangent space of the Lagrangian cone $T_{J(\tau(q_1,q_2),-z)}\mathcal{L}$.
\begin{align}\label{eq:D1_S*1}
&(zq_1\frac{d}{dq_1}+H_1)S(\tau(q_1,q_2),q_1,q_2,z)^*1
=(zq_1\frac{d}{dq_1}+H_1)\left(I_0+\frac{I_1}{z}+\frac{I_2}{z^2}
+\frac{I_3}{z^3}+...\right)\nonumber
\\=&I^1_{11}(q_1,q_2) S^*H_1+I^2_{11}(q_1,q_2) S^*H_2
\end{align}
where
\[I^1_{11}(q_1,q_2)=I^2_{11}(q_1,q_2)+1
=1+q_1\frac{d}{dq_1}I_1.\]
Similarly,
\begin{align}\label{eq:D2_S*1}
&(zq_2\frac{d}{dq_2}+H_2)S(\tau(q_1,q_2),q_1,q_2,z)^*1
=(zq_2\frac{d}{dq_2}+H_2)\left(I_0+\frac{I_1}{z}+\frac{I_2}{z^2}
+\frac{I_3}{z^3}+...\right)\nonumber
\\=&I^2_{11}(q_2,q_1) S^*H_1+I^1_{11}(q_2,q_1) S^*H_2.
\end{align}

From equations~\eqref{eq:D1_S*1} and \eqref{eq:D2_S*1}, we can get the expression of $S^*H_1$ and $S^*H_2$.
then \[[z^{-k}]S^*H_1,\quad [z^{-k}]S^*H_2\] are both
 homogenous polynomials of $H_1,H_2,\lambda,\mu$ with degree $k+1$ and the coefficients are hyper-geometric series of $q_1$ and $q_2$.
So we have the following
\begin{small}
\begin{align*}
&(zq_1\frac{d}{dq_1}+H_1)S(\tau(q_1,q_2),q_1,q_2,z)^*H_1
=\left(\lambda^2I^{1;\lambda^2}_{22a}(q_1,q_2)+\mu^2I^{1;\mu^2}_{22a}(q_1,q_2)\right) S^*1
+I^1_{22}(q_1,q_2) S^*(H_1H_2)\\
&(zq_2\frac{d}{dq_2}+H_2)S(\tau(q_1,q_2),q_1,q_2,z)^*H_2
=\left(\mu^2I^{1;\lambda^2}_{22a}(q_2,q_1)+\lambda^2I^{1;\mu^2}_{22a}(q_2,q_1)\right) S^*1
+I^1_{22}(q_2,q_1) S^*(H_1H_2)
\end{align*}
\end{small}
From equations~\eqref{eq:D1_S*1} and \eqref{eq:D2_S*1}, we can get the expression of $S^*H_1H_2$,
then \[[z^{-k}]S^*(H_1H_2)\] is a
 homogeneous polynomials of $H_1,H_2,\lambda,\mu$ with degree $k+2$ and the coefficients are hyper-geometric series of $q_1$ and $q_2$. Then taking derivatives, we get
 \begin{scriptsize}
\begin{align*}
&(zq_1\frac{d}{dq_1}+H_1)S(\tau(q_1,q_2),q_1,q_2,z)^*H_1H_2
=\left(\lambda^2I^{1;\lambda^2}_{33a}(q_1,q_2)+\mu^2I^{1;\mu^2}_{33a}(q_1,q_2)\right) S^*H_1+
\left(\lambda^2I^{2;\lambda^2}_{33a}(q_1,q_2)+\mu^2I^{2;\mu^2}_{33a}(q_1,q_2)\right) S^*H_2
\\
&(zq_2\frac{d}{dq_2}+H_2)S(\tau(q_1,q_2),q_1,q_2,z)^*H_1H_2
=\left(\lambda^2I^{2;\mu^2}_{33a}(q_2,q_1)+\mu^2I^{2;\mu^2}_{33a}(q_2,q_1)\right) S^*H_1+
\left(\lambda^2I^{1;\mu^2}_{33a}(q_2,q_1)+\mu^2I^{2;\lambda^2}_{33a}(q_2,q_1)\right) S^*H_2
\end{align*}
\end{scriptsize}

\end{proof}



\subsection{Relations among the entries of matrixes $A_1$ and $A_2$}

From the compatibility of the quantum product and inner product,  we get the following linear  relations
among the entries of $A_1$ and $A_2$.
\begin{lemma}
\label{lem:linear-rel-entr}
\begin{align*}
(i)&\quad I_{22a}^{1;\mu^2}(q_1,q_2)+I_{22a}^{2;\mu^2}(q_1,q_2)=I_{22}^{1}(q_1,q_2)\\
(ii)&\quad I_{22a}^{1;\lambda^2}(q_1,q_2)+I_{22a}^{2;\lambda^2}(q_1,q_2)=I_{22}^{2}(q_1,q_2)
\\(iii)&\quad I_{33a}^{1;\lambda^2}(q_1, q_2)-I_{33a}^{2;\lambda^2}(q_1, q_2)=- \left(1+I_{11}^{2}(q_1, q_2)\right)
\\(iv)&\quad I_{33a}^{1;\mu^2}(q_1, q_2)-I_{33a}^{2;\mu^2}(q_1, q_2)= I_{11}^{2}(q_1, q_2)
\end{align*}

\end{lemma}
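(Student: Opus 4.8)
The plan is to derive these four linear relations from the compatibility between the quantum product and the twisted Poincaré pairing $\eta$. The key structural fact is that the quantum product $*_\tau$ is Frobenius with respect to $\eta$, meaning $\langle a *_\tau b, c\rangle^{tw} = \langle a, b *_\tau c\rangle^{tw}$ for all classes $a,b,c$. Since the matrices $A_1$ and $A_2$ encode the quantum multiplication by the shifted classes $H_1 + q_1 \frac{d\tau}{dq_1}$ and $H_2 + q_2 \frac{d\tau}{dq_2}$ in the basis $(1, H_1, H_2, H_1 H_2)$, the Frobenius property forces $\eta A_i$ to be a symmetric matrix for $i = 1, 2$. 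I would begin by writing out $\eta$ from \eqref{paring-matrix} and forming the products $\eta A_1$ and $\eta A_2$ explicitly, then reading off the symmetry constraints entry by entry.

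Concretely, first I would compute $\eta A_1$ using the given matrix $\eta = \frac{1}{2(\lambda^2-\mu^2)}\left(\begin{smallmatrix}0&1&-1&0\\1&0&0&-\lambda^2\\-1&0&0&\mu^2\\0&-\lambda^2&\mu^2&0\end{smallmatrix}\right)$ and the displayed form of $A_1$. The symmetry requirement $(\eta A_1)_{ij} = (\eta A_1)_{ji}$ then yields a system of equations relating the entries. I expect that comparing the $(2,4)$ and $(4,2)$ entries, together with the $(3,4)$ and $(4,3)$ entries, will produce relations $(i)$ and $(ii)$, since these couple the $I_{22a}$ terms (from multiplication into the $H_1, H_2$ row) against the $I_{22}$ terms (from multiplication into the $H_1 H_2$ row). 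Similarly, comparing the entries that mix the $S^* 1$ column with the $S^*(H_1 H_2)$ column should produce relations $(iii)$ and $(iv)$, where the off-diagonal asymmetry of $\eta$ (the $-\lambda^2$ and $\mu^2$ entries) is exactly what separates the $\lambda^2$-superscript relation from the $\mu^2$-superscript relation, and where the $+1$ shift in $(iii)$ reflects the normalization $I^1_{11} = I^2_{11} + 1$ already recorded in the proof of the previous lemma.

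After establishing symmetry of $\eta A_1$, I would repeat the computation for $A_2$; by the manifest symmetry of the setup under $q_1 \leftrightarrow q_2$, $H_1 \leftrightarrow H_2$, $\lambda \leftrightarrow \mu$, the relations for $A_2$ should be obtained from those for $A_1$ by the same substitution, so no genuinely new calculation is needed and the four stated relations (which are phrased purely in terms of $A_1$-type entries evaluated at $(q_1,q_2)$) suffice. The main obstacle I anticipate is purely bookkeeping: correctly tracking which linear combination $\lambda^2 I^{k;\lambda^2}_{\bullet} + \mu^2 I^{k;\mu^2}_{\bullet}$ appears in each matrix entry, since each entry of $A_1$ is already a two-term sum, and the symmetrization will split these sums so that the coefficients of $\lambda^2$ and of $\mu^2$ must match independently. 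Because $\lambda^2$ and $\mu^2$ are algebraically independent equivariant parameters, I can legitimately equate their coefficients separately, and this is precisely the mechanism that converts each single symmetry equation into a clean relation with a fixed superscript. The only subtlety to verify is that the overall scalar factor $\frac{1}{2(\lambda^2-\mu^2)}$ in $\eta$ cancels uniformly and does not obscure the matching of $\lambda^2$- and $\mu^2$-coefficients.
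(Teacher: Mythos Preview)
Your approach is correct and matches the paper's own proof: the paper also invokes the Frobenius compatibility $\langle a*_\tau v,w\rangle=\langle v,a*_\tau w\rangle$, rephrases it as the symmetry of $\eta A_1$, and then extracts the four relations by direct entrywise computation (separating the $\lambda^2$- and $\mu^2$-coefficients exactly as you describe). Your write-up is in fact more detailed than the paper's, which simply says ``direct computation gives the Lemma''; your identification of which entry comparisons yield which relations and your remark that the $A_2$ case follows by the $q_1\leftrightarrow q_2$, $\lambda\leftrightarrow\mu$ symmetry are both sound.
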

\begin{proof}
For any $v,w \in H^*(\mathbb{P}^1\times\mathbb{P}^1;\mathbb{Q})$,
\[\langle q_1\frac{d}{dq_1}\tau(q_1,q_2) *_{\tau(q_1,q_2)}v,w\rangle=\langle v, q_1\frac{d}{dq_1}\tau(q_1,q_2) *_{\tau(q_1,q_2)}w\rangle\]
this is equivalent to  $\eta \cdot A_1$ is a symmetric matrix, then direct computation gives the Lemma~\ref{lem:linear-rel-entr}.
\end{proof}

By the definitions of canonical basis $\left\{e_{\alpha\beta}:\alpha,\beta\in\{0,1\}\right\}$ and canonical coordinates $\left\{u^{\alpha\beta}:\alpha,\beta\in\{0,1\}\right\}$,
\[\left\{\begin{aligned}(H_1+q_1\frac{d\tau(q_1,q_2)}{dq_1})*_{\tau(q_1,q_2)}e_{\alpha\beta}(q_1,q_2)=\left(\mathfrak{h}_{\alpha\beta}^{(1)}+q_1\frac{du^{\alpha\beta}}{dq_1}\right)e_{\alpha\beta}(q_1,q_2)\\
(H_2+q_2\frac{d\tau(q_1,q_2)}{dq_2})*_{\tau(q_1,q_2)}e_{\alpha\beta}(q_1,q_2)=\left(\mathfrak{h}_{\alpha\beta}^{(2)}+q_2\frac{du^{\alpha\beta}}{dq_2}\right)e_{\alpha\beta}(q_1,q_2)\end{aligned}
\right.\]
where $h_{\alpha\beta}^{(1)}=(-1)^\alpha\lambda,\, h_{\alpha\beta}^{(2)}=(-1)^\beta\mu.$
Now two define functions \[M_{\alpha\beta}:=\mathfrak{h}_{\alpha\beta}^{(1)}+q_1\frac{du^{\alpha\beta}}{dq_1}
,\quad L_{\alpha\beta}=\mathfrak{h}_{\alpha\beta}^{(2)}+q_2\frac{du^{\alpha\beta}}{dq_2}\]
then $\{M_{\alpha\beta}\}, \{L_{\alpha\beta}\}$ are eigenvalues of the quantum matrix $A_1$ and $A_2$ respectively. Since the matrices $A_1$ and $A_2$ can be diagonalized  simultaneously, we get
\begin{align}\label{eq:com}
A_1A_2=A_2A_1
\end{align}
The commutation relation \eqref{eq:com} gives
\begin{lemma}
\label{lem:I33a}\begin{tiny}
\begin{align*}
I_{33a}^{2;\lambda^2}(q_1, q_2)
=&\frac{1}{I_{22}^{1}(q_1, q_2)I_{22}^{1}(q_2, q_1)-I_{22}^{2}(q_1, q_2)I_{22}^{2}(q_2, q_1)}\Bigg(\bigg(\left(2I_{11}^{2}(q_2, q_1)+1\right)I_{22}^{1}(q_1, q_2)+\left(I_{22}^{1}(q_2, q_1)-I_{22}^{2}(q_2, q_1)-I_{22a}^{1;\mu^2}(q_2, q_1)\right)I_{11}^{2}(q_1, q_2)
\\&\hspace{150pt}-I_{11}^{2}(q_2, q_1)I_{22a}^{1;\lambda^2}(q_1,q_2)+I_{22}^{1}(q_2, q_1)-I_{22}^{2}(q_2, q_1)-I_{22a}^{1;\mu^2}(q_2, q_1)\bigg)I_{22}^{2}(q_1, q_2)
\\&\hspace{150pt}-\left(I_{11}^{2}(q_1, q_2)I_{22a}^{1;\mu^2}(q_2, q_1)+I_{22a}^{1;\lambda^2}(q_1, q_2)\left(1+I_{11}^{2}(q_2, q_1)\right)\right)I_{22}^{1}(q_1, q_2)\Bigg)\\
I_{33a}^{2;\mu^2}(q_1, q_2)
=&\frac{1}{I_{22}^{1}(q_1, q_2)I_{22}^{1}(q_2, q_1)-I_{22}^{2}(q_1, q_2)I_{22}^{2}(q_2, q_1)}
\Bigg((1+I_{11}^{2}(q_2, q_1))I_{22}^{1}(q_1, q_2)^2-\bigg((I_{11}^{2}(q_2, q_1)+1)I_{22}^{2}(q_1, q_2)+I_{11}^{2}(q_1, q_2)I_{22a}^{1;\lambda^2}(q_2, q_1)
\\&\hspace{150pt}+I_{11}^{2}(q_2, q_1)I_{22a}^{1;\mu^2}(q_1, q_2)+I_{22a}^{1;\mu^2}(q_1, q_2)\bigg)I_{22}^{1}(q_1, q_2)+I_{22}^{2}(q_1, q_2)\bigg(-I_{11}^{2}(q_2, q_1)I_{22a}^{1;\mu^2}(q_1, q_2)
\\&\hspace{150pt}-\left(I_{11}^{2}(q_1, q_2)+1\right)I_{22a}^{1;\lambda^2}(q_2, q_1)+I_{22}^{2}(q_2, q_1)\left(2I_{11}^{2}(q_1, q_2)+1\right)\bigg)\Bigg)
\end{align*}
\end{tiny}
\end{lemma}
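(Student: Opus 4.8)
The plan is to extract both formulas purely from the commutation relation \eqref{eq:com}, $A_1A_2=A_2A_1$, together with the linear relations of Lemma~\ref{lem:linear-rel-entr}. The only genuinely new unknowns appearing on the right-hand sides are $I_{33a}^{2;\lambda^2}(q_1,q_2)$ and $I_{33a}^{2;\mu^2}(q_1,q_2)$, equivalently the single matrix entry $(A_1)_{34}=\lambda^2 I_{33a}^{2;\lambda^2}(q_1,q_2)+\mu^2 I_{33a}^{2;\mu^2}(q_1,q_2)$; so the strategy is to isolate this entry inside \eqref{eq:com} and solve a small linear system for it.

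First I would exploit the $\mathbb{Z}/2$-grading of $H^*$ by parity of cohomological degree: setting $V_{\mathrm{even}}=\langle 1, H_1H_2\rangle$ and $V_{\mathrm{odd}}=\langle H_1,H_2\rangle$, both $A_1$ and $A_2$ are block-anti-diagonal,
\[A_i=\begin{pmatrix}0&B_i\\ C_i&0\end{pmatrix},\]
so that \eqref{eq:com} is equivalent to the two $2\times2$ identities $B_1C_2=B_2C_1$ and $C_1B_2=C_2B_1$. The unknown entries $(A_1)_{24}=\lambda^2 I_{33a}^{1;\lambda^2}+\mu^2 I_{33a}^{1;\mu^2}$ and $(A_1)_{34}$ sit in the second column of $C_1$; using Lemma~\ref{lem:linear-rel-entr}(iii),(iv) I would rewrite $(A_1)_{24}=(A_1)_{34}-\lambda^2(1+I_{11}^{2}(q_1,q_2))+\mu^2 I_{11}^{2}(q_1,q_2)$, and symmetrically for the second column of $C_2$, so that each of $C_1,C_2$ carries only one independent unknown combination. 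In the same way Lemma~\ref{lem:linear-rel-entr}(i),(ii) collapses the degree-two rows of $B_1,B_2$: the row-sum $(A_1)_{12}+(A_1)_{13}$ becomes $\lambda^2 I_{22}^{2}(q_1,q_2)+\mu^2 I_{22}^{1}(q_1,q_2)$, which is exactly the pairing that will produce the advertised denominator.

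Then I would read off, from the two block identities, the scalar equations in which the $\lambda,\mu$-free entries $I_{22}^{1},I_{22}^{2}$ multiply the unknown degree-two columns (namely the $(2,2)$-entry of $B_1C_2=B_2C_1$ and the analogous odd-block entries of $C_1B_2=C_2B_1$). Since every matrix entry is polynomial in $\lambda^2,\mu^2$ with coefficients in $\mathbb{Q}[[q_1,q_2]]$, matching the coefficients of $\lambda^2$ and of $\mu^2$ in these identities turns them into an honest linear system over $\mathbb{Q}[[q_1,q_2]]$ for the four functions $I_{33a}^{2;\lambda^2}(q_1,q_2),I_{33a}^{2;\mu^2}(q_1,q_2)$ and their $q_1\leftrightarrow q_2$ transposes. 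Solving by Cramer's rule, the coefficient determinant is $I_{22}^{1}(q_1,q_2)I_{22}^{1}(q_2,q_1)-I_{22}^{2}(q_1,q_2)I_{22}^{2}(q_2,q_1)$, which is the claimed denominator; it is invertible in $\mathbb{Q}[[q_1,q_2]]$ since at $q_1=q_2=0$ one has $I_{22}^{1}=0$ and $I_{22}^{2}=1$, giving the nonzero constant term $-1$. Reading off the two components then yields the stated formulas, and Lemma~\ref{lem:linear-rel-entr}(iii),(iv) recover $I_{33a}^{1;\lambda^2},I_{33a}^{1;\mu^2}$ for free.

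The consistency of the (over-determined) commutation system is automatic: $A_1$ and $A_2$ are the quantum multiplications by commuting vector fields on a semisimple Frobenius manifold, hence simultaneously diagonalizable with eigenvalues $M_{\alpha\beta},L_{\alpha\beta}$, so \eqref{eq:com} holds identically and the linear system is guaranteed solvable. The main obstacle is therefore not existence but the bookkeeping: one must select, among the many coefficient equations produced by $B_1C_2=B_2C_1$ and $C_1B_2=C_2B_1$, precisely the combination that decouples the $(q_1,q_2)$ unknowns from their $q_1\leftrightarrow q_2$ transposes and collapses to the single determinant above, and then verify that the resulting numerators simplify to the displayed polynomials. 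This last simplification is lengthy but entirely mechanical, and is where repeated use of Lemma~\ref{lem:linear-rel-entr} is essential.
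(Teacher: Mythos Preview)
Your approach is correct and is essentially the same as the paper's: both extract the formulas from the commutator equation \eqref{eq:com} together with Lemma~\ref{lem:linear-rel-entr}. The only difference is efficiency. The paper does not set up a four-variable system at all; writing $C=A_1A_2-A_2A_1$, it simply reads off the two scalar equations $[\mu^2]C_{2,2}=0$ and $[\mu^2]C_{3,3}=0$. After substituting Lemma~\ref{lem:linear-rel-entr}(iii),(iv), these two equations involve \emph{only} the pair $I_{33a}^{2;\mu^2}(q_1,q_2)$ and $I_{33a}^{2;\lambda^2}(q_2,q_1)$, with coefficient matrix
\[
\begin{pmatrix}
I_{22}^{2}(q_2,q_1) & -\,I_{22}^{1}(q_1,q_2)\\
I_{22}^{1}(q_2,q_1) & -\,I_{22}^{2}(q_1,q_2)
\end{pmatrix},
\]
whose determinant is exactly the advertised denominator. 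So the decoupling you anticipate in your last paragraph is automatic once you pick these two particular entries (the diagonal of the odd--odd block $C_1B_2-C_2B_1$) and the $\mu^2$-coefficient; there is no need to pass through a $4\times4$ Cramer step or to search among the even--even block equations. Your invertibility check at $q_1=q_2=0$ and your remark on consistency via semisimplicity are both fine and add detail the paper omits.
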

\begin{proof}
Let $C=A_1A_2-A_2A_1$, then solving the equation
\[[\mu^2]C_{2,2}=[\mu^2]C_{3,3}=0\] we can obtain the above two identities in Lemma~\ref{lem:I33a}.
\end{proof}
Together with Lemma~\ref{lem:linear-rel-entr}, all the entries $\{I_{33a}^{\bullet;\bullet}\}$  can be written as rational functions in terms of other entries of $A_1$ and $A_2$. For other entries of the matrices $A_1$ and $A_2$, we have many relations given in the following lemma.
\begin{lemma}
\label{lem:YY-rel}\begin{align*}
&(i)\quad
\left(1+I_{11}^{2}(q_1,q_2)\right)I_{22}^{1}(q_1,q_2)+I_{11}^{2}(q_1.q_2)I_{22}^{2}(q_1,q_2)
\nonumber\\&\hspace{20pt}=
4q_1\left(I_{22}^{1}(q_1,q_2)+I_{22}^{2}(q_2,q_1)+I_{22}^{1}(q_2,q_1)+I_{22}^{2}(q_1,q_2)\right)
\left(1+I_{11}^{2}(q_1,q_2)+I_{11}^{2}(q_2,q_1)\right)
\\&(ii)\quad \frac{d}{d q_1}I_{11}^{2}(q_1, q_2)
=\frac{d}{d q_2}I_{11}^{2}(q_2, q_1)
\\&(iii)\quad 4q_1\frac{d}{dq_1}I_{11}^{2}(q_2, q_1)+4q_2\frac{d}{dq_2}I_{11}^{2}(q_1, q_2)+2I_{11}^{2}(q_1, q_2)+2I_{11}^{2}(q_2, q_1)+2
=(1-4q_1-4q_2)
\frac{d}{dq_2}I_{11}^{2}(q_2, q_1)
\\&(iv)\quad I_{22a}^{1;\lambda^2}(q_1, q_2) =1-I_{11}^{2}(q_1, q_2) I_{22}^{2}(q_1, q_2)
+4q_1\left(1+I_{11}^{2}(q_1, q_2)+I_{11}^{2}(q_2, q_1)\right)
\left(I_{22}^{1}(q_2, q_1)+I_{22}^{2}(q_1, q_2)\right)
\\&
(v)\quad I_{22a}^{1;\mu^2}(q_1, q_2) = -I_{11}^{2}(q_1, q_2)I_{22}^{1}(q_1, q_2)
+4q_1 \left(I_{22}^{1}(q_1, q_2)+I_{22}^{2}(q_2, q_1)\right)\left(1+I_{11}^{2}(q_2, q_1)+I_{11}^{2}(q_1, q_2)\right)\\&
(vi)\quad I_{22}^{2}(q_1, q_2)
=-\frac{1}{(1+I_{11}^{2}(q_1, q_2)+I_{11}^{2}(q_2, q_1))(4q_1-4q_2-1)}
+\frac{-4q_1+4q_2-1}{4q_1-4q_2-1}I_{22}^{1}(q_2, q_1)
\end{align*}

\end{lemma}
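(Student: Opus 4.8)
The plan is to obtain all six identities as entries of a handful of structural matrix equations satisfied by the quantum–multiplication matrices $A_1,A_2$, rather than by manipulating the hypergeometric series head‑on. Three inputs drive the argument. First, since $A_1,A_2$ are simultaneously diagonalizable with eigenvalues $M_{\alpha\beta},L_{\alpha\beta}$ obeying $M^2-\lambda^2=q_1\big(2(M+L)\big)^2$ and $L^2-\mu^2=q_2\big(2(M+L)\big)^2$, the matrices satisfy the quantum–cohomology relations (the leading part of the Picard--Fuchs system \eqref{eq:PF})
\[A_1^2=\lambda^2\,\mathrm{Id}+q_1G^2,\qquad A_2^2=\mu^2\,\mathrm{Id}+q_2G^2,\qquad G:=2A_1+2A_2.\]
Second, the commutation \eqref{eq:com}, $A_1A_2=A_2A_1$. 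Third, Lemma~\ref{lem:linear-rel-entr}. I work modulo $H_1^2=\lambda^2,\ H_2^2=\mu^2$, set $e_1=(1,0,0,0)^{T}$ and $P:=1+I_{11}^2(q_1,q_2)+I_{11}^2(q_2,q_1)$, and record the computations I reuse: $Ge_1=(0,2P,2P,0)^{T}$, whence $G^2e_1=4P\big(\mathrm{col}_2(A_1)+\mathrm{col}_3(A_1)+\mathrm{col}_2(A_2)+\mathrm{col}_3(A_2)\big)$.

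I first read (i), (iv), (v) off the single vector identity $A_1^2e_1=\lambda^2e_1+q_1G^2e_1$. Its fourth entry is precisely (i), since the left side is $(1+I_{11}^2(q_1,q_2))I_{22}^1(q_1,q_2)+I_{11}^2(q_1,q_2)I_{22}^2(q_1,q_2)$ while $4q_1$ times the fourth entry of $G^2e_1$ equals $4q_1P\big(I_{22}^1(q_1,q_2)+I_{22}^2(q_2,q_1)+I_{22}^1(q_2,q_1)+I_{22}^2(q_1,q_2)\big)$. The first entry carries only coefficients of $\lambda^2$ and $\mu^2$. Collecting the coefficient of $\mu^2$, rewriting the $G^2e_1$-term by the Lemma~\ref{lem:linear-rel-entr} identities $I_{22a}^{1;\mu^2}+I_{22a}^{2;\mu^2}=I_{22}^1$ and $I_{22a}^{1;\lambda^2}+I_{22a}^{2;\lambda^2}=I_{22}^2$ (the latter applied with $q_1\leftrightarrow q_2$), and then eliminating $I_{22a}^{2;\mu^2}=I_{22}^1-I_{22a}^{1;\mu^2}$, the $I_{11}^2$-terms telescope and leave (v); the coefficient of $\lambda^2$ yields (iv) by the identical elimination with $I_{22a}^{2;\lambda^2}=I_{22}^2-I_{22a}^{1;\lambda^2}$.

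Relations (ii) and (iii) follow from the first‑order period recursions encoding \eqref{eq:PF} on coefficients: writing $c_{d_1,d_2}=\frac{(2d_1+2d_2-1)!}{(d_1!)^2(d_2!)^2}$, one has $(d_1+1)^2c_{d_1+1,d_2}=2(d_1+d_2)\big(2(d_1+d_2)+1\big)c_{d_1,d_2}$ and its $d_2$-analogue. Equating the two gives $(d_1+1)^2c_{d_1+1,d_2}=(d_2+1)^2c_{d_1,d_2+1}$, the coefficientwise form of (ii); combining them gives $(d_2+1)^2c_{d_1-1,d_2+1}=d_1^2c_{d_1,d_2}$, whence $(d_2+1)^2c_{d_1,d_2+1}-4(d_2+1)^2c_{d_1-1,d_2+1}-4d_2^2c_{d_1,d_2}=(8d_1d_2+2d_1+2d_2)c_{d_1,d_2}$, which is exactly the coefficient identity (iii). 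For (vi) I would combine the fourth‑entry identity (i) with its $q_1\leftrightarrow q_2$ image, namely the fourth entry of $A_2^2e_1=\mu^2e_1+q_2G^2e_1$, to cancel the symmetric quantity $I_{22}^1(q_1,q_2)+I_{22}^2(q_2,q_1)+I_{22}^1(q_2,q_1)+I_{22}^2(q_1,q_2)$ common to both right‑hand sides, and then feed in the $(1,1)$- and $(4,1)$-entries of $A_1A_2=A_2A_1$ reduced by Lemma~\ref{lem:linear-rel-entr}; the reciprocal $1/P$ enters through the change‑of‑frame determinant $P=(1+I_{11}^2(q_1,q_2))(1+I_{11}^2(q_2,q_1))-I_{11}^2(q_1,q_2)I_{11}^2(q_2,q_1)$ that arises when $S^*H_1,S^*H_2$ are solved for from \eqref{eq:D1_S*1}--\eqref{eq:D2_S*1}.

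The hard part is precisely (vi). Whereas (i), (iv), (v) are single matrix entries that drop out once the $G^2e_1$-term is simplified, and (ii), (iii) are transparent on coefficients, isolating (vi) requires selecting the one linear combination of the $A_1$- and $A_2$-forms of (i) and of $A_1A_2=A_2A_1$ for which the symmetric sum cancels but the $1/P$ term survives; most combinations leave an unwanted symmetric remainder. In practice I would first compute both sides of (vi) to several orders in $(q_1,q_2)$ to pin down the rational coefficients $4q_1-4q_2\mp1$, then confirm the closed form by clearing the denominator $4q_1-4q_2-1$ and reducing against the already‑established (i)--(v). Care is also needed to separate the $\lambda^2$- and $\mu^2$-coefficients only after reducing in $H_1^2=\lambda^2,\ H_2^2=\mu^2$, and to keep the $q_1\leftrightarrow q_2$ bookkeeping consistent throughout.
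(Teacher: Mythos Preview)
Your derivation of (i), (iv), (v) from the matrix identity $A_1^2e_1=\lambda^2e_1+q_1G^2e_1$ is correct and is exactly the paper's argument in disguise: the paper rewrites the first Picard--Fuchs operator in \eqref{eq:PF} acting on $S^*1$ as $S^*(1,H_1,H_2,H_1H_2)\cdot B$ with a $4\times 1$ column $B$, and the $z^0$-part of $B=0$ is precisely your identity. The paper then reads (i) from $B_{4,1}=0$ and (iv), (v) from the $\lambda^2$- and $\mu^2$-components of $B_{1,1}=0$ after invoking Lemma~\ref{lem:linear-rel-entr}, just as you do.

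For (ii) and (iii) you take a genuinely different, more elementary route via the hypergeometric recursions for $c_{d_1,d_2}$. The paper instead obtains them from the $z^1$-part of $B_{2,1}=0$ and $B_{3,1}=0$, where the derivative terms $zq_1\tfrac{d}{dq_1}$ and $z\big(q_1\tfrac{d}{dq_1}+q_2\tfrac{d}{dq_2}\big)$ in $B$ contribute. Your approach is shorter and makes the two scalar identities transparent; the paper's has the virtue of packaging all six relations into a single matrix computation (the $z$-expansion of $B$).

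For (vi) your instinct to use the commutator is right, but your plan is more circuitous than needed. The paper simply extracts from $[\lambda^2](A_1A_2-A_2A_1)_{1,1}=0$ the single relation
\[
-I_{22}^{1}(q_2,q_1)I_{11}^{2}(q_1,q_2)+I_{22}^{2}(q_1,q_2)I_{11}^{2}(q_2,q_1)-I_{22}^{1}(q_2,q_1)+I_{22}^{2}(q_1,q_2)=I_{22a}^{1;\lambda^2}(q_1,q_2)-I_{22a}^{1;\mu^2}(q_2,q_1),
\]
and then substitutes (iv) and the $q_1\!\leftrightarrow\! q_2$ image of (v) on the right; after cancellation this is (vi) directly. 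Neither the $(4,1)$-entry of the commutator nor the pairing of (i) with its swap is required, and no preliminary numerical guessing is needed.
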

\begin{proof}
First we write the first Picard-Fuchs equation in \eqref{eq:PF}
as matrix form
\begin{align*}
0=&\left({D_{H_1}}^2-\lambda^2
-q_1\left(2D_{H_1}+2D_{H_2}
\right)\left(2D_{H_1}+2D_{H_2}+z
\right)\right)S(\tau(q_1,q_2),q_1,q_2)^*1
\\=&S(\tau(q_1,q_2),q_1,q_2)^*(1,H_1,H_2,H_1H_2)\cdot B
\end{align*}
where $B$ is a $4\times1$ column matrix:
\begin{align*}
B=&\left(A_1+zq_1\frac{d}{dq_1}\right)\begin{pmatrix}
0\\
1+I_{11}^2(q_1,q_2)\\
I_{11}^2(q_1,q_2)\\
0
\end{pmatrix}
-\lambda^2\begin{pmatrix}
1\\0\\0\\0
\end{pmatrix}
\\&-2q_1\left(2(A_1+A_2)+z+2\left(q_1\frac{d}{dq_1}+q_2\frac{d}{dq_2}\right)\right)\begin{pmatrix}
0\\
1+I_{11}^2(q_1,q_2)+I_{11}^2(q_2,q_1)\\
1+I_{11}^2(q_1,q_2)+I_{11}^2(q_2,q_1)\\
0
\end{pmatrix}.
\end{align*}

Then the first equation $(i)$ just follows from the equation $B_{4,1}=0$.
The equations $(ii)$ and $(iii)$ follow from  $B_{2,1}=0$ and $B_{3,1}=0$. 
Together with the relations $(i)$ and $(ii)$ in Lemma~\ref{lem:linear-rel-entr}, the two equations $[\lambda^2]B_{1,1}=0$  and  $[\mu^2]B_{1,1}=0$ imply the equations $(iv), (v)$ in Lemma~\ref{lem:YY-rel}.
 To prove $(vi)$, we need the equation below, which are derived from $[\lambda^2]C_{1,1}=0$
 \[-I_{22}^{1}(q_2, q_1)I_{11}^{2}(q_1, q_2)+I_{22}^{2}(q_1, q_2)I_{11}^{2}(q_2, q_1)-I_{22}^{1}(q_2, q_1)+I_{22}^{2}(q_1, q_2) = I_{22a}^{1;\lambda^2}(q_1, q_2)-I_{22a}^{1;\mu^2}(q_2, q_1).\]
 Then combing with equations $(iv)$ and $(v)$ in Lemma~\ref{lem:YY-rel}, we can obtain $(vi)$.
\end{proof}

\section{$R$ matrix computations}\label{sec:R}

\subsection{QDE for $R$ matrix}
Following from the quantum differential equations \eqref{eq:QDE-S} and the relation between $S$ and $R$ matrix (c.f. \cite{givental2001semisimple}), we get the quantum differential equations satisfied by  the vector $(R_{1|\bar{\alpha\beta}},R_{H_1|\bar{\alpha\beta}},R_{H_2|\bar{\alpha\beta}},R_{H_1H_2|\bar{\alpha\beta}})$.
\begin{align}\label{eq:qde-R}
\left\{\begin{aligned}
\left(zq_1\frac{d}{dq_1}+M_{\alpha\beta}(q_1,q_2)\right)
(R_{1|\bar{\alpha\beta}},R_{H_1|\bar{\alpha\beta}},R_{H_2|\bar{\alpha\beta}},R_{H_1H_2|\bar{\alpha\beta}})
=(R_{1|\bar{\alpha\beta}},R_{H_1|\bar{\alpha\beta}},R_{H_2|\bar{\alpha\beta}},R_{H_1H_2|\bar{\alpha\beta}})A_1\\
\left(zq_2\frac{d}{dq_2}+L_{\alpha\beta}(q_1,q_2)\right)
(R_{1|\bar{\alpha\beta}},R_{H_1|\bar{\alpha\beta}},R_{H_2|\bar{\alpha\beta}},R_{H_1H_2|\bar{\alpha\beta}})
=(R_{1|\bar{\alpha\beta}},R_{H_1|\bar{\alpha\beta}},R_{H_2|\bar{\alpha\beta}},R_{H_1H_2|\bar{\alpha\beta}})A_2
\end{aligned}\right.\end{align}
Direct consequences of the quantum differential equations for the $R$ matrix are the following recursion relations of the $R$ matrix.
\begin{lemma}
\label{lem:Rrecursion}
The $R$ matrix satisfies the following recursion
\begin{tiny}
\begin{align*}
(i) \quad{R(z)_{H_1}}^{\alpha\beta}
=&\frac{(1+I_{11}^{2}(q_2,q_1))M_{\alpha\beta}-I_{11}^{2}(q_1,q_2)L_{\alpha\beta}}{1+\bar{I}_{11}^{2}(q_1,q_2)}{R(z)_1}^{\alpha\beta}
\\&+z\frac{1}{1+\bar{I}_{11}^{2}(q_1,q_2)}\Bigg((1+I_{11}^{2}(q_2,q_1))q_1\frac{d}{dq_1}{R(z)_1}^{\alpha\beta}-I_{11}^{2}(q_1,q_2)q_2\frac{d}{dq_2}{R(z)_1}^{\alpha\beta}
\\&\hspace{150pt}+(1+I_{11}^{2}(q_2,q_1))\frac{q_1\frac{d}{dq_1}\|e_{\alpha\beta}\|}{\|e_{\alpha\beta}\|}{R(z)_1}^{\alpha\beta}
-I_{11}^{2}(q_1,q_2)\frac{q_2\frac{d}{dq_2}\|e_{\alpha\beta}\|}{\|e_{\alpha\beta}\|}{R(z)_1}^{\alpha\beta}\Bigg)\\
(ii) \quad {R(z)_{H_2}}^{\alpha\beta}
=&\frac{(1+I_{11}^{2}(q_1,q_2))L_{\alpha\beta}-I_{11}^{2}(q_2,q_1)M_{\alpha\beta}}{1+\bar{I}_{11}^{2}(q_1,q_2)}{R(z)_1}^{\alpha\beta}
\\&+z\frac{1}{1+\bar{I}_{11}^{2}(q_1,q_2)}\Bigg((1+I_{11}^{2}(q_1,q_2))q_2\frac{d}{dq_2}{R(z)_1}^{\alpha\beta}-I_{11}^{2}(q_2,q_1)q_1\frac{d}{dq_1}{R(z)_1}^{\alpha\beta}
\\&\hspace{150pt}+(1+I_{11}^{2}(q_1,q_2))\frac{q_2\frac{d}{dq_2}\|e_{\alpha\beta}\|}{\|e_{\alpha\beta}\|}{R(z)_1}^{\alpha\beta}
-I_{11}^{2}(q_2,q_1)\frac{q_1\frac{d}{dq_1}\|e_{\alpha\beta}\|}{\|e_{\alpha\beta}\|}{R(z)_1}^{\alpha\beta}\Bigg)
\\(iii)\quad
{R(z)_{H_1H_2}}^{\alpha\beta}
=&z\frac{1}{\tilde{I}_{22}(q_1,q_2)+\tilde{I}_{22}(q_2,q_1)}\left(\frac{q_2\frac{d}{dq_2}\|e_{\alpha\beta}\|}{\|e_{\alpha\beta}\|}+\frac{q_1\frac{d}{dq_1}\|e_{\alpha\beta}\|}{\|e_{\alpha\beta}\|}\right)\Bigg({R(z)_{H_1}}^{\alpha\beta}+{R(z)_{H_2}}^{\alpha\beta}
\Bigg)
\\&-z\frac{1}{\tilde{I}_{22}(q_1,q_2)+\tilde{I}_{22}(q_2,q_1)}\cdot X(q_1,q_2)
\cdot \Bigg({R(z)_{H_1}}^{\alpha\beta}+{R(z)_{H_2}}^{\alpha\beta}
\Bigg)
\\&+z\frac{1}{(1+\bar{I}_{11}^2)(\tilde{I}_{22}(q_1,q_2)+\tilde{I}_{22}(q_2,q_1))}
\left(q_1\frac{d}{dq_1}+q_2\frac{d}{dq_2}\right)\left((L_{\alpha\beta}+M_{\alpha\beta}){R(z)_1}^{\alpha\beta}\right)
\\&+z^2\frac{1}{(1+\bar{I}_{11}^2)(\tilde{I}_{22}(q_1,q_2)+\tilde{I}_{22}(q_2,q_1))}
\left(q_1\frac{d}{dq_1}+q_2\frac{d}{dq_2}\right)\Bigg(q_2\frac{d}{dq_2}{R(z)_1}^{\alpha\beta}+q_1\frac{d}{dq_1}{R(z)_1}^{\alpha\beta}
+\frac{q_2\frac{d}{dq_2}\|e_{\alpha\beta}\|}{\|e_{\alpha\beta}\|}{R(z)_1}^{\alpha\beta}
+\frac{q_1\frac{d}{dq_1}\|e_{\alpha\beta}\|}{\|e_{\alpha\beta}\|}{R(z)_1}^{\alpha\beta}\Bigg)
\\&+\frac{1}{\tilde{I}_{22}(q_1,q_2)+\tilde{I}_{22}(q_2,q_1)}\left(L_{\alpha\beta}+M_{\alpha\beta}\right)
\left({R(z)_{H_1}}^{\alpha\beta}+{R(z)_{H_2}}^{\alpha\beta}\right)
\\&-\left(\mu^2\frac{\tilde{I}_{22}(q_1,q_2)}{\tilde{I}_{22}(q_1,q_2)+\tilde{I}_{22}(q_2,q_1)} +\lambda^2 \frac{\tilde{I}_{22}(q_2,q_1)}{\tilde{I}_{22}(q_1,q_2)+\tilde{I}_{22}(q_2,q_1)}
\right){R(z)_{1}}^{\alpha\beta}
\end{align*}\end{tiny}
where we define
\begin{align*}X(q_1,q_2)
=\left(q_1\frac{d}{dq_1}+q_2\frac{d}{dq_2}\right)\ln\left(1+I_{11}^{2}(q_1,q_2)+I_{11}^{2}(q_2,q_1)\right),
\\
\bar{I}_{11}^{2}(q_1,q_2)=I_{11}^{2}(q_1,q_2)+I_{11}^{2}(q_2,q_1),\quad \tilde{I}_{22}(q_1,q_2)=I_{22}^{1}(q_1,q_2)+I_{22}^{2}(q_2,q_1).
\end{align*}

\end{lemma}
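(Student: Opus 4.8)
The plan is to read the three recursions off directly from the two matrix quantum differential equations in \eqref{eq:qde-R}, expanding each equation in the flat basis $(1,H_1,H_2,H_1H_2)$ and exploiting the sparse block structure of $A_1$ and $A_2$. Write $\vec{R}=({R(z)_1}^{\alpha\beta},{R(z)_{H_1}}^{\alpha\beta},{R(z)_{H_2}}^{\alpha\beta},{R(z)_{H_1H_2}}^{\alpha\beta})$. The first thing I would record is the relation between the entries $R_{\bullet|\bar{\alpha\beta}}$ appearing in \eqref{eq:qde-R} and the normalized entries ${R(z)_\bullet}^{\alpha\beta}$ of the statement: these differ by the idempotent norm $\|e_{\alpha\beta}\|$. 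Consequently, every time the operator $zq_i\frac{d}{dq_i}$ acts on an $R$-entry it produces, by the product rule, a logarithmic correction $z\frac{q_i\frac{d}{dq_i}\|e_{\alpha\beta}\|}{\|e_{\alpha\beta}\|}$; these are precisely the $\|e_{\alpha\beta}\|$-terms visible in (i)--(iii).

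For (i) and (ii) I would extract the first-component equation of each quantum differential equation. Since the first columns of $A_1$ and $A_2$ are $(0,1+I_{11}^2(q_1,q_2),I_{11}^2(q_1,q_2),0)^{T}$ and $(0,I_{11}^2(q_2,q_1),1+I_{11}^2(q_2,q_1),0)^{T}$, the two first-component equations form a linear system for $({R(z)_{H_1}}^{\alpha\beta},{R(z)_{H_2}}^{\alpha\beta})$ with coefficient matrix
\[
\begin{pmatrix} 1+I_{11}^2(q_1,q_2) & I_{11}^2(q_1,q_2) \\ I_{11}^2(q_2,q_1) & 1+I_{11}^2(q_2,q_1) \end{pmatrix},
\]
whose determinant is exactly $1+\bar{I}_{11}^2(q_1,q_2)$. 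Solving by Cramer's rule yields (i) and (ii) verbatim: the scalar $M_{\alpha\beta},L_{\alpha\beta}$ parts come from the $(zq_i\frac{d}{dq_i}+M_{\alpha\beta}/L_{\alpha\beta})$ factors on the left-hand side, while the $z$-derivative terms and their $\|e_{\alpha\beta}\|$-corrections ride along with $q_i\frac{d}{dq_i}{R(z)_1}^{\alpha\beta}$.

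For (iii) I would instead sum all four of the second- and third-component equations of the two quantum differential equations. Because the $(4,2),(4,3)$ entries of $A_1$ are $I_{22}^1(q_1,q_2),I_{22}^2(q_1,q_2)$ and those of $A_2$ are $I_{22}^2(q_2,q_1),I_{22}^1(q_2,q_1)$, the coefficient of ${R(z)_{H_1H_2}}^{\alpha\beta}$ in the summed equation is exactly $\tilde{I}_{22}(q_1,q_2)+\tilde{I}_{22}(q_2,q_1)$, matching the denominator in (iii). Solving for ${R(z)_{H_1H_2}}^{\alpha\beta}$ expresses it, up to that denominator, as $\left(zq_1\frac{d}{dq_1}+zq_2\frac{d}{dq_2}+M_{\alpha\beta}+L_{\alpha\beta}\right)\!\left({R(z)_{H_1}}^{\alpha\beta}+{R(z)_{H_2}}^{\alpha\beta}\right)$ minus an ${R(z)_1}^{\alpha\beta}$-term. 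Substituting (i) and (ii) into this produces the remaining lines: applying $z\left(q_1\frac{d}{dq_1}+q_2\frac{d}{dq_2}\right)$ to the factor $\frac{1}{1+\bar{I}_{11}^2}$ in (i),(ii) generates the $X(q_1,q_2)$-term, since $X=\left(q_1\frac{d}{dq_1}+q_2\frac{d}{dq_2}\right)\ln\!\left(1+\bar{I}_{11}^2\right)$; applying it to the numerators generates the $z^2$ second-derivative terms; and the $M_{\alpha\beta}+L_{\alpha\beta}$ factor yields the line with $(L_{\alpha\beta}+M_{\alpha\beta})\!\left({R(z)_{H_1}}^{\alpha\beta}+{R(z)_{H_2}}^{\alpha\beta}\right)$.

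The one place that needs more than bookkeeping is the ${R(z)_1}^{\alpha\beta}$-coefficient in (iii). The summed $R_1$-term equals $-{R(z)_1}^{\alpha\beta}$ times the sum of the four $(1,2)$ and $(1,3)$ entries of $A_1,A_2$, namely the $\lambda^2,\mu^2$-combinations of the $I_{22a}$'s. Here I would invoke Lemma~\ref{lem:linear-rel-entr}(i),(ii): the identities $I_{22a}^{1;\mu^2}+I_{22a}^{2;\mu^2}=I_{22}^1$ and $I_{22a}^{1;\lambda^2}+I_{22a}^{2;\lambda^2}=I_{22}^2$ (applied at both $(q_1,q_2)$ and $(q_2,q_1)$) collapse the sum to $\lambda^2\tilde{I}_{22}(q_2,q_1)+\mu^2\tilde{I}_{22}(q_1,q_2)$, which upon dividing by $\tilde{I}_{22}(q_1,q_2)+\tilde{I}_{22}(q_2,q_1)$ is exactly the final $\lambda^2,\mu^2$-line of (iii). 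The main obstacle is therefore organizational rather than conceptual: tracking the product rule consistently through the $\frac{1}{1+\bar{I}_{11}^2}$ and $\|e_{\alpha\beta}\|$ factors when substituting (i),(ii), and recognizing the $I_{22a}$-combinations in the form required for Lemma~\ref{lem:linear-rel-entr} to apply.
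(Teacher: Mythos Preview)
Your proposal is correct and follows exactly the paper's approach: extract the first column of each equation in \eqref{eq:qde-R} to obtain a $2\times 2$ linear system for ${R(z)_{H_1}}^{\alpha\beta},{R(z)_{H_2}}^{\alpha\beta}$ (giving (i),(ii)), then sum the four second- and third-column equations to isolate ${R(z)_{H_1H_2}}^{\alpha\beta}$ (giving (iii)). In fact you supply more detail than the paper, which simply says ``Sum the above 4 equations, we get equation (iii)''; your explicit identification of Lemma~\ref{lem:linear-rel-entr}(i),(ii) as the mechanism collapsing the $I_{22a}$-sum to $\lambda^2\tilde I_{22}(q_2,q_1)+\mu^2\tilde I_{22}(q_1,q_2)$, and of the substitution of (i)$+$(ii) into $z(q_1\partial_{q_1}+q_2\partial_{q_2})\big({R(z)_{H_1}}^{\alpha\beta}+{R(z)_{H_2}}^{\alpha\beta}\big)$ as the source of the $X$- and $z^2$-lines, is exactly what is needed and is left implicit in the paper.
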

\begin{proof}
The first column of quantum differential equations~\eqref{eq:qde-R}  give the recursion relations
\[(1+I_{11}^{2}(q_1,q_2))R(z)_{H_1|\bar{\alpha\beta}}+I_{11}^{2}(q_1,q_2) R_{H_2|\bar{\alpha\beta}}=\left(zq_1\frac{d}{dq_1}+M_{\alpha\beta}\right)R(z)_{1|\bar{\alpha\beta}}\]
and
\[(1+I_{11}^{2}(q_2,q_1))R(z)_{H_2|\bar{\alpha\beta}}+I_{11}^{2}(q_2,q_1) R_{H_1|\bar{\alpha\beta}}=\left(zq_2\frac{d}{dq_2}+L_{\alpha\beta}\right)R(z)_{1|\bar{\alpha\beta}}\]
here
\[R(z)_{1|\bar{\alpha\beta}}=\|e_{\alpha\beta}\|{R(z)_{1}}^{\alpha\beta}
=\Psi_{1|\bar{\alpha\beta}}{R(z)_{1}}^{\alpha\beta}\]
with $\|e_{\alpha\beta}\|=\Psi_{1|\bar{\alpha\beta}}$ is the norm of the quantum canonical basis.
This is equivalent to
\[(1+I_{11}^{2}(q_1,q_2)){R(z)_{H_1}}^{\alpha\beta}+I_{11}^{2}(q_1,q_2) {R_{H_2}}^{\alpha\beta}=\|e_{\alpha\beta}\|^{-1}zq_1\frac{d}{dq_1}\left(
\|e_{\alpha\beta}\|{R(z)_{1}}^{\alpha\beta}\right)+M_{\alpha\beta}{R(z)_{1}}^{\alpha\beta}\]
and
\[(1+I_{11}^{2}(q_2,q_1)){R(z)_{H_2}}^{\alpha\beta}+I_{11}^{2}(q_2,q_1) {R_{H_1}}^{\alpha\beta}=\|e_{\alpha\beta}\|^{-1}zq_2\frac{d}{dq_2}\left(
\|e_{\alpha\beta}\|{R(z)_{1}}^{\alpha\beta}\right)+L_{\alpha\beta}{R(z)_{1}}^{\alpha\beta}\]
Solving these two linear equations, we get equations $(i)$ and $(ii)$ in Lemma~\ref{lem:Rrecursion}.


The second and third columns of quantum differential equations~\eqref{eq:qde-R}  give the recursion relations
\begin{footnotesize}
\[\left(\lambda^2 I_{22a}^{1;\lambda^2}(q_1,q_2))+\mu^2 I_{22a}^{1;\mu^2}(q_1,q_2)\right){R(z)_{1}}^{\alpha\beta}+I_{22}^{1}(q_1,q_2) {R_{H_1H_2}}^{\alpha\beta}=\|e_{\alpha\beta}\|^{-1}zq_1\frac{d}{dq_1}\left(\|e_{\alpha\beta}\|{R(z)_{H_1}}^{\alpha\beta}\right)+M_{\alpha\beta}{R(z)_{H_1}}^{\alpha\beta}\]
\end{footnotesize}
\begin{footnotesize}
\[\left(\lambda^2 I_{22a}^{2;\lambda^2}(q_1,q_2))+\mu^2 I_{22a}^{2;\mu^2}(q_1,q_2)\right){R(z)_{1}}^{\alpha\beta}+I_{22}^{2}(q_1,q_2) {R_{H_1H_2}}^{\alpha\beta}=\|e_{\alpha\beta}\|^{-1}zq_1\frac{d}{dq_1}\left(\|e_{\alpha\beta}\|{R(z)_{H_2}}^{\alpha\beta}\right)+M_{\alpha\beta}
{R(z)_{H_2}}^{\alpha\beta}\]
\end{footnotesize}
and
\begin{footnotesize}
\[\left(\mu^2 I_{22a}^{2;\lambda^2}(q_2,q_1))+\lambda^2 I_{22a}^{2;\mu^2}(q_2,q_1)\right){R(z)_{1}}^{\alpha\beta}+I_{22}^{2}(q_2,q_1) {R_{H_1H_2}}^{\alpha\beta}=\|e_{\alpha\beta}\|^{-1}zq_2\frac{d}{dq_2}\left(\|e_{\alpha\beta}\|{R(z)_{H_1}}^{\alpha\beta}\right)+L_{\alpha\beta}{R(z)_{H_1}}^{\alpha\beta}\]
\end{footnotesize}
\begin{footnotesize}
\[\left(\mu^2 I_{22a}^{1;\lambda^2}(q_2,q_1))+\lambda^2 I_{22a}^{1;\mu^2}(q_2,q_1)\right){R(z)_{1}}^{\alpha\beta}+I_{22}^{1}(q_2,q_1) {R_{H_1H_2}}^{\alpha\beta}=\|e_{\alpha\beta}\|^{-1}zq_2\frac{d}{dq_2}\left(\|e_{\alpha\beta}\|{R(z)_{H_2}}^{\alpha\beta}\right)+L_{\alpha\beta}
{R(z)_{H_2}}^{\alpha\beta}\]
\end{footnotesize}
Sum the above 4 equations, we get equation $(iii)$ in Lemma~\ref{lem:Rrecursion}.
\end{proof}
Then following from Lemma~\ref{lem:Rrecursion} combining with the Proposition~\ref{pro:Rstar1}, we get the corollary
\begin{corollary}\label{cor:R-ring}
For the $R$ matrix of the $(\lambda,\mu)-$ equivariant Gromov-Witten theory of $K_{\mathbb{P}^1\times\mathbb{P}^1}$,
\begin{small}
\begin{align*}
{(R_k)_{1}}^{\alpha\beta}
\in& \mathbb{G}_{3k,8k},
\\
{(R_k)_{H_1}}^{\alpha\beta}
\in& \mathbb{G}_{3k,8k+1}\otimes_{\mathbb{Q}}\mathbb{Q}\left\{\frac{1+I_{11}^{2}(q_2,q_1)}{1+\bar{I}_{11}^{2}(q_1,q_2)},\frac{I_{11}^{2}(q_1,q_2)}{1+\bar{I}_{11}^{2}(q_1,q_2)}\right\},
\\
{(R_k)_{H_2}}^{\alpha\beta}
\in& \mathbb{G}_{3k,8k+1}\otimes_{\mathbb{Q}}\mathbb{Q}\left\{\frac{1+I_{11}^{2}(q_1,q_2)}{1+\bar{I}_{11}^{2}(q_1,q_2)},\frac{I_{11}^{2}(q_2,q_1)}{1+\bar{I}_{11}^{2}(q_1,q_2)}\right\},
\\
{(R_k)_{H_1H_2}}^{\alpha\beta}
\in& \mathbb{G}_{3k,8k+2}\otimes_{\mathbb{Q}}\mathbb{Q}\Bigg\{\frac{X(q_1,q_2)}{(1+\bar{I}_{11}^{2}(q_1,q_2))(\tilde{I}_{22}(q_1,q_2)+\tilde{I}_{22}(q_2,q_1))}, \
\frac{ \tilde{I}_{22}(q_1,q_2)}{\tilde{I}_{22}(q_1,q_2)+\tilde{I}_{22}(q_2,q_1)},
\\&\hspace{70pt}\frac{1}{(1+\bar{I}_{11}^{2}(q_1,q_2))(\tilde{I}_{22}(q_1,q_2)+\tilde{I}_{22}(q_2,q_1))},\  \frac{ \tilde{I}_{22}(q_2,q_1)}{\tilde{I}_{22}(q_1,q_2)+\tilde{I}_{22}(q_2,q_1)}\Bigg\}.
\end{align*}
\end{small}
Moreover,
\begin{align*}
{(R_k)_{H_1}}^{\alpha\beta}
+{(R_k)_{H_2}}^{\alpha\beta}
\in& \mathbb{G}_{3k,8k+1}\otimes_{\mathbb{Q}}\mathbb{Q}\left\{\frac{1}{1+\bar{I}_{11}^{2}(q_1,q_2)}\right\},\\
\mu^2{(R_k)_{H_1}}^{\alpha\beta}
+\lambda^2{(R_k)_{H_2}}^{\alpha\beta}
\in &
\mathbb{G}_{3k,8k+3}\otimes_{\mathbb{Q}}\mathbb{Q}\left\{\frac{1}{1+\bar{I}_{11}^{2}(q_1,q_2)},\frac{I_{11}^{2}(q_1,q_2)}{1+\bar{I}_{11}^{2}(q_1,q_2)}\right\}.
\end{align*}
\end{corollary}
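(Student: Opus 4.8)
The plan is to take the first-component membership ${(R_k)_1}^{\alpha\beta}\in\mathbb{G}_{3k,8k}$ as the input supplied by Proposition~\ref{pro:Rstar1}, and to bootstrap the three remaining components directly from the recursions $(i)$--$(iii)$ of Lemma~\ref{lem:Rrecursion}. Concretely, I would expand $R(z)=\sum_k R_k z^k$, extract the coefficient of $z^k$ in each recursion, and then track the bidegree $(m,n)$ through the grading of $\mathbb{G}_{m,n}$, using throughout the two elementary facts that $\mathbb{G}_{m,n}\cdot\mathbb{G}_{m',n'}\subseteq\mathbb{G}_{m+m',n+n'}$ and that $\mathbb{G}_{m,n}\subseteq\mathbb{G}_{m+1,n+3}$ (multiply numerator and denominator by one generator $\lambda^2L_{\alpha\beta}+\mu^2M_{\alpha\beta}$).

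The one genuinely new structural ingredient I would establish first is that the logarithmic derivation $q_i\frac{d}{dq_i}$ is graded-compatible with $\mathbb{G}$. Differentiating the defining equations $M^2-\lambda^2=q_1(2(M+L))^2$ and $L^2-\mu^2=q_2(2(M+L))^2$ and solving the resulting linear system produces closed forms such as $q_1\frac{d}{dq_1}M_{\alpha\beta}=\frac{(M_{\alpha\beta}^2-\lambda^2)(M_{\alpha\beta}L_{\alpha\beta}+\mu^2)}{2(\lambda^2L_{\alpha\beta}+\mu^2M_{\alpha\beta})}$, whose denominator is exactly the generator of $\mathcal{Q}_1$ with matching indices; hence $q_i\frac{d}{dq_i}M_{\alpha\beta},\,q_i\frac{d}{dq_i}L_{\alpha\beta}\in\mathbb{G}_{1,4}$ and $\frac{q_i\frac{d}{dq_i}(\lambda^2L_{\alpha\beta}+\mu^2M_{\alpha\beta})}{\lambda^2L_{\alpha\beta}+\mu^2M_{\alpha\beta}}\in\mathbb{G}_{2,6}$. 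Since $q_i\frac{d}{dq_i}$ kills $\lambda,\mu$, writing $f=S/T$ and using $\frac{q_i\frac{d}{dq_i}T}{T}\in\mathbb{G}_{2,6}$ together with the Leibniz rule gives $q_i\frac{d}{dq_i}\colon\mathbb{G}_{m,n}\to\mathbb{G}_{m+2,n+6}\subseteq\mathbb{G}_{m+3,n+9}$. I would also record that $\frac{q_i\frac{d}{dq_i}\|e_{\alpha\beta}\|}{\|e_{\alpha\beta}\|}$ is an element of $\mathbb{G}$ of degree $0$ and small denominator weight, so that multiplying it against ${(R_{k-1})_1}^{\alpha\beta}\in\mathbb{G}_{3k-3,8k-8}$ keeps us inside $\mathbb{G}_{3k,8k+1}$. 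Combined with the trivial shifts from multiplying by $M_{\alpha\beta},L_{\alpha\beta}$ (raising $n$ by $1$) and by $\lambda^2,\mu^2$ (raising $n$ by $2$), every operation on the right of Lemma~\ref{lem:Rrecursion} then visibly respects the grading.

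With this in hand the first two components fall out of $(i)$ and $(ii)$. Extracting $z^k$ splits each into an algebraic part $\frac{(1+I_{11}^{2}(q_2,q_1))M_{\alpha\beta}-I_{11}^{2}(q_1,q_2)L_{\alpha\beta}}{1+\bar I_{11}^{2}}{(R_k)_1}^{\alpha\beta}$, which lies in $\mathbb{G}_{3k,8k+1}\otimes_{\mathbb{Q}}\mathbb{Q}\{\frac{1+I_{11}^{2}(q_2,q_1)}{1+\bar I_{11}^{2}},\frac{I_{11}^{2}(q_1,q_2)}{1+\bar I_{11}^{2}}\}$ after multiplying ${(R_k)_1}^{\alpha\beta}$ by $M_{\alpha\beta}$ or $L_{\alpha\beta}$, and a derivative part built from $q_i\frac{d}{dq_i}{(R_{k-1})_1}^{\alpha\beta}$ and $\frac{q_i\frac{d}{dq_i}\|e\|}{\|e\|}{(R_{k-1})_1}^{\alpha\beta}$, which by the previous paragraph lands in $\mathbb{G}_{3k-1,8k-2}\subseteq\mathbb{G}_{3k,8k+1}$ against the same two module generators. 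For the two ``Moreover'' identities I would add $(i)$ and $(ii)$: in ${(R_k)_{H_1}}^{\alpha\beta}+{(R_k)_{H_2}}^{\alpha\beta}$ the $I_{11}^2$ cross-terms cancel via $I_{11}^{2}(q_1,q_2)+I_{11}^{2}(q_2,q_1)=\bar I_{11}^{2}$, collapsing the numerator to $M_{\alpha\beta}+L_{\alpha\beta}$ and the two generators to $\frac{1}{1+\bar I_{11}^{2}}$; for $\mu^2{(R_k)_{H_1}}^{\alpha\beta}+\lambda^2{(R_k)_{H_2}}^{\alpha\beta}$ I would regroup with the same identity together with Lemma~\ref{lem:linear-rel-entr} to reduce to the span of $\frac{1}{1+\bar I_{11}^{2}}$ and $\frac{I_{11}^{2}(q_1,q_2)}{1+\bar I_{11}^{2}}$.

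Finally, relation $(iii)$ yields ${(R_k)_{H_1H_2}}^{\alpha\beta}$, and this is where I expect the real difficulty to sit. After substituting the just-proven expression for ${(R_k)_{H_1}}^{\alpha\beta}+{(R_k)_{H_2}}^{\alpha\beta}$ and taking the $z^k$ coefficient, the $X$-term (which carries an explicit factor $z$) picks up ${(R_{k-1})_{H_1}}^{\alpha\beta}+{(R_{k-1})_{H_2}}^{\alpha\beta}\in\mathbb{G}_{3(k-1),8(k-1)+1}\otimes\{\frac{1}{1+\bar I_{11}^{2}}\}$, which I would check sits in $\mathbb{G}_{3k,8k+2}\otimes\{\frac{X}{(1+\bar I_{11}^{2})(\tilde I_{22}(q_1,q_2)+\tilde I_{22}(q_2,q_1))}\}$; the $z^2$ double-derivative term carries ${(R_{k-2})_1}^{\alpha\beta}$, which after two applications of $q_i\frac{d}{dq_i}$ lands in $\mathbb{G}_{3k-2,8k-4}\subseteq\mathbb{G}_{3k,8k+2}$ against $\frac{1}{(1+\bar I_{11}^{2})(\tilde I_{22}+\tilde I_{22})}$; and the two remaining algebraic terms generate $\frac{\tilde I_{22}(q_1,q_2)}{\tilde I_{22}+\tilde I_{22}}$ and $\frac{\tilde I_{22}(q_2,q_1)}{\tilde I_{22}+\tilde I_{22}}$ with coefficients $\mu^2{(R_k)_1}^{\alpha\beta},\lambda^2{(R_k)_1}^{\alpha\beta}\in\mathbb{G}_{3k,8k+2}$. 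The main obstacle is precisely the bookkeeping of $(iii)$: I must verify that $X$ enters only linearly, that the nested $(q_1\frac{d}{dq_1}+q_2\frac{d}{dq_2})$ distributes by Leibniz onto pieces already controlled by the derivative-closure above, and that every residual rational factor reorganizes—using the relations of Lemmas~\ref{lem:linear-rel-entr}, \ref{lem:I33a} and \ref{lem:YY-rel}—onto exactly the four listed module generators, with no stray factor of $1$ or of $\frac{I_{11}^{2}(q_2,q_1)}{1+\bar I_{11}^{2}}$ surviving outside the allowed span.
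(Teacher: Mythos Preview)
Your approach is essentially the paper's own: take ${(R_k)_1}^{\alpha\beta}\in\mathbb{G}_{3k,8k}$ from Proposition~\ref{pro:Rstar1}, establish the derivative-closure statements $q_i\tfrac{d}{dq_i}M_{\alpha\beta},\,q_i\tfrac{d}{dq_i}L_{\alpha\beta}\in\mathbb{G}_{1,4}$ and $\tfrac{q_i\frac{d}{dq_i}\|e_{\alpha\beta}\|}{\|e_{\alpha\beta}\|}\in\mathbb{G}_{2,6}$ (the paper gets the latter directly from formula~\eqref{eq:norme}), and then read off the remaining components from the recursions $(i)$--$(iii)$ of Lemma~\ref{lem:Rrecursion} by induction and grading count. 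The paper's proof is precisely this, only stated more tersely; it even displays the explicit linear combinations $R_{H_1}+R_{H_2}$ and $\mu^2R_{H_1}+\lambda^2R_{H_2}$ that you propose to form.

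One simplification you should make: the appeals to Lemmas~\ref{lem:linear-rel-entr}, \ref{lem:I33a} and \ref{lem:YY-rel} are unnecessary. Recursion $(iii)$ as stated in Lemma~\ref{lem:Rrecursion} is already expressed purely in terms of $\tilde I_{22}$, $\bar I_{11}^2$, $X$, $M_{\alpha\beta}$, $L_{\alpha\beta}$ and the norm-logarithmic-derivatives, so no $I_{22a}^{\bullet}$ or $I_{33a}^{\bullet}$ ever enters and nothing needs ``reorganizing''. Likewise, the combination $\mu^2R_{H_1}+\lambda^2R_{H_2}$ is just $\mu^2\cdot(i)+\lambda^2\cdot(ii)$; the only identity used is $I_{11}^2(q_2,q_1)=\bar I_{11}^2-I_{11}^2(q_1,q_2)$, which collapses the result onto the span of $\tfrac{1}{1+\bar I_{11}^2}$ and $\tfrac{I_{11}^2(q_1,q_2)}{1+\bar I_{11}^2}$ directly. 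Your worry about $X$ entering nonlinearly is also moot: $X$ appears exactly once in $(iii)$, in a term carrying an explicit factor of $z$, so at level $z^k$ it multiplies ${(R_{k-1})_{H_1}}^{\alpha\beta}+{(R_{k-1})_{H_2}}^{\alpha\beta}$, which by the first ``Moreover'' statement is $X$-free.
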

\begin{proof}
Following from  the recursion formula $(i),(ii)$ in the Lemma~\ref{lem:Rrecursion}, we obtain
\begin{align*}
{R(z)_{H_1}}^{\alpha\beta}
+{R(z)_{H_2}}^{\alpha\beta}
=&\frac{ L_{\alpha\beta}+M_{\alpha\beta}}{1+I_{11}^{2}(q_1,q_2)+I_{11}^{2}(q_2,q_1)}{R(z)_1}^{\alpha\beta}
\\&+z\frac{1}{1+I_{11}^{2}(q_1,q_2)+I_{11}^{2}(q_2,q_1)}\Bigg(q_2\frac{d}{dq_2}{R(z)_1}^{\alpha\beta}+q_1\frac{d}{dq_1}{R(z)_1}^{\alpha\beta}
\\&\hspace{150pt}
+\left(\frac{q_1\frac{d}{dq_1}\|e_{\alpha\beta}\|}{\|e_{\alpha\beta}\|}+\frac{q_2\frac{d}{dq_2}\|e_{\alpha\beta}\|}{\|e_{\alpha\beta}\|}\right){R(z)_1}^{\alpha\beta}\Bigg)
\end{align*}
and
\begin{align*}
&\mu^2{R(z)_{H_1}}^{\alpha\beta}
+\lambda^2{R(z)_{H_2}}^{\alpha\beta}
\\=&\frac{ \lambda^2 L_{\alpha\beta}+\mu^2 M_{\alpha\beta}+I_{11}^2(q_1,q_2)(\lambda^2-\mu^2)L_{\alpha\beta}+I_{11}^2(q_2,q_1)(\mu^2-\lambda^2)M_{\alpha\beta}}{1+I_{11}^{2}(q_1,q_2)+I_{11}^{2}(q_2,q_1)}{R(z)_1}^{\alpha\beta}
\\&+z\frac{1}{1+I_{11}^{2}(q_1,q_2)+I_{11}^{2}(q_2,q_1)}\Bigg(\lambda^2q_2\frac{d}{dq_2}{R(z)_1}^{\alpha\beta}+\mu^2q_1\frac{d}{dq_1}{R(z)_1}^{\alpha\beta}
\\&\hspace{150pt}+(\lambda^2-\mu^2)I_{11}^2(q_1,q_2)\left(\left(\frac{q_2\frac{d}{dq_2}\|e_{\alpha\beta}\|}{\|e_{\alpha\beta}\|}+q_2\frac{d}{dq_2}\right){R(z)_{1}}^{\alpha\beta}\right)
\\&\hspace{150pt}+(\mu^2-\lambda^2)I_{11}^2(q_2,q_1)\left(\left(\frac{q_1\frac{d}{dq_1}\|e_{\alpha\beta}\|}{\|e_{\alpha\beta}\|}+q_1\frac{d}{dq_1}\right){R(z)_{1}}^{\alpha\beta}\right)
\\&\hspace{150pt}
+\left(\mu^2\frac{q_1\frac{d}{dq_1}\|e_{\alpha\beta}\|}{\|e_{\alpha\beta}\|}+\lambda^2\frac{q_2\frac{d}{dq_2}\|e_{\alpha\beta}\|}{\|e_{\alpha\beta}\|}\right){R(z)_1}^{\alpha\beta}\Bigg)
\end{align*}
By the formula~\eqref{eq:norme}, we can compute
\[\frac{q_1\frac{d}{dq_1}\|e_{\alpha\beta}\|}{\|e_{\alpha\beta}\|},\ \frac{q_2\frac{d}{dq_2}\|e_{\alpha\beta}\|}{\|e_{\alpha\beta}\|}\in \mathbb{G}_{2,6}; \quad q_1\frac{d}{dq_1}M_{\alpha\beta},\ q_2\frac{d}{dq_2}M_{\alpha\beta}, \ q_1\frac{d}{dq_1}L_{\alpha\beta},\ q_2\frac{d}{dq_2}L_{\alpha\beta}\in \mathbb{G}_{1,4}.\]
Then together with induction on the behavior of $R_k$, we can get this corollary.
\end{proof}

\section{Finite generation property of $F_g$}\label{sec:finit-gen}
In this section, we prove the theorem of finite generation property for  genus $g$ generating function $F_g(\tau(q_1,q_2))$.
To state the theorem more precisely, we define some  auxiliary  generators
\[P_1=\frac{1}{1+\bar{I}_{11}^2},\
P_2=\frac{I_{11}^2(q_1,q_2)}{1+\bar{I}_{11}^2},\]
\[P_3=\frac{1}{\tilde{I}_{22}(q_1,q_2)+\tilde{I}_{22}(q_2,q_1)}, \ P_4=\frac{\tilde{I}_{22}(q_1,q_2)}{\tilde{I}_{22}(q_1,q_2)+\tilde{I}_{22}(q_2,q_1)}.\]

\begin{theorem}
\label{thm:finit-gen}For $g\geq2$, the genus-$g$ generating function of the equivariant Gromov-Witten invariants
\[F_g(\tau(q_1,q_2))
\in \mathbb{G}\left[P_1,P_2,P_3,P_4,X\right]\]
Moreover, the degree of $X$ in the polynomial expression of $F_g$ is at most $3g-3$.

\end{theorem}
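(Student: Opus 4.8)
The plan is to realize $F_g(\tau(q_1,q_2))$ through the Givental-Teleman reconstruction of higher-genus potentials from genus-zero data. Since the equivariant cohomology $H^*_{(\mathbb{C}^*)^4}(\mathbb{P}^1\times\mathbb{P}^1)$ is semisimple, Teleman's theorem expresses $F_g$ as a sum over stable graphs of genus $g$, where each vertex carries a Kontsevich-Witten tau function (a product of $\psi$-class integrals), each edge carries the edge propagator built from the $R$-matrix, and each leg/half-edge carries an appropriate insertion. Concretely, I would write
\begin{align*}
F_g(\tau(q_1,q_2))=\sum_{\Gamma}\frac{1}{|\mathrm{Aut}(\Gamma)|}
\mathrm{Cont}_\Gamma,
\end{align*}
where $\mathrm{Cont}_\Gamma$ is assembled from the vertex contributions $\langle\prod\psi^{\bullet}\rangle$, the translation $T(z)=z(1-R(z)^{-1})\mathbf{1}$, and the bivector propagator
$V(z,w)=\frac{1}{z+w}\big(\eta^{-1}-R(z)^{-1}\eta^{-1}R(w)^{-1\,T}\big)$.
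The key point is that every analytic ingredient that enters these graph sums is a coefficient of the $R$-matrix, and Corollary~\ref{cor:R-ring} already pins down exactly which ring each such coefficient lives in.

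The \textbf{first step} is bookkeeping of degrees. Using Corollary~\ref{cor:R-ring}, I would show that each occurrence of $(R_k)$ contributes a factor in $\mathbb{G}_{3k,8k+\bullet}$ times one of the finitely many rational generators $P_1,P_2,P_3,P_4,X$ (together with the normalized derivatives $q_i\frac{d}{dq_i}\|e_{\alpha\beta}\|/\|e_{\alpha\beta}\|$ and $q_i\frac{d}{dq_i}M_{\alpha\beta}$, $q_i\frac{d}{dq_i}L_{\alpha\beta}$, which by the computation at the end of Corollary~\ref{cor:R-ring} already lie in $\mathbb{G}_{2,6}$ and $\mathbb{G}_{1,4}$). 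The grading $\mathbb{G}=\bigoplus_{k}\mathbb{G}_{k,3k}$ is designed precisely so that the pair $(3k,8k)$ does \emph{not} lie in the diagonal $8k\ne 3\cdot 3k$; the extra powers of $P_3$ (which carries a factor $\tilde I_{22}+\tilde I_{22}$, an element of $\mathbb{G}_{1,3}$ in the denominator) and the propagator denominators $\frac{1}{z+w}$ are exactly what restore the balance, so that after summing over a graph of total genus $g$ every monomial lands in $\mathbb{G}$ times a polynomial in $P_1,\dots,P_4,X$. I would verify this by tracking the virtual-dimension constraint $\sum(3g_v-3+n_v)=3g-3$ over vertices against the number of edges and the degree each $R_k$ carries.

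The \textbf{bound on the $X$-degree} is the most structurally delicate claim, and I expect it to be the main obstacle. The generator $X$ enters only through the $H_1H_2$-entry of the $R$-matrix, via equation $(iii)$ of Lemma~\ref{lem:Rrecursion} and the corresponding slot of Corollary~\ref{cor:R-ring}; thus each factor of $X$ is tied to a use of $(R_k)_{H_1H_2}$. To get $\deg_X F_g\le 3g-3$ I would argue that the number of times the top-level ($H_1H_2$) component of the propagator can appear in any stable graph contributing to $F_g$ is controlled by the total number of $\psi$-insertions, which is bounded by $3g-3$ (the virtual dimension of $\overline M_g$); combinatorially, each independent factor of $X$ must be absorbed by one unit of dimension at a vertex, and the dimension budget is exactly $3g-3$. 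Making this precise requires showing that $X$ cannot be generated spuriously from the lower entries $(R_k)_1,(R_k)_{H_1},(R_k)_{H_2}$ — which is guaranteed because those entries in Corollary~\ref{cor:R-ring} involve only $P_1,P_2$ and the $\mathbb{G}$-generators, never $X$ — and that the recursive application of $q_i\frac{d}{dq_i}$ does not raise the $X$-degree beyond this count. The final step is then a clean induction on $g$ (equivalently on the number of edges/genus of $\Gamma$) combining the degree bookkeeping with this $X$-counting, which yields both the membership in $\mathbb{G}[P_1,P_2,P_3,P_4,X]$ and the degree estimate simultaneously.
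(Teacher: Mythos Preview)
Your overall strategy---Givental--Teleman stable-graph reconstruction, then analyze each building block via Corollary~\ref{cor:R-ring}---is exactly the paper's approach, and the grading idea (balance $(m,n)$ against the dimension constraint $\sum k_i+\sum l_j=3g-3-|E(\Gamma)|$) is also what the paper does. One minor confusion: $P_3=1/(\tilde I_{22}(q_1,q_2)+\tilde I_{22}(q_2,q_1))$ is \emph{not} related to $\mathbb{G}_{1,3}$; the $\tilde I_{22}$'s are hypergeometric series in $q_1,q_2$, not polynomials in $L,M,\lambda,\mu$. The $P_i$'s and $X$ are genuinely new generators adjoined to $\mathbb{G}$, and the bookkeeping should keep them strictly separate from the $\mathbb{G}_{m,n}$ factors. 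The paper just verifies directly that $\omega_{g_v,n_v}\in\mathbb{G}_{1,3g_v}$, $T_k\in\mathbb{G}_{3k,8k}$, $V_k\in\mathbb{G}_{3(k+1),8(k+1)+3}\otimes\mathbb{Q}\{P\text{-monomials}\}$, and checks that the resulting bidegree lands on the diagonal $n=3m$ under the dimension constraint.

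The real gap is your $X$-degree argument. You claim each factor of $X$ ``must be absorbed by one unit of dimension at a vertex,'' i.e.\ each $X$ costs one $\psi$-class. This is false: from Lemma~\ref{lem:Rrecursion}(iii) the generator $X$ already appears in $(R_1)_{H_1H_2}$, hence in $V_0$ (the $z,w$-degree zero part of the edge propagator, built from $R_0\otimes R_1$), which consumes \emph{no} $\psi$-dimension at all. So dimension-counting cannot bound $\deg_X$. The paper's argument is much simpler and is the one you should use: $X$ enters the graph sum \emph{only} through the edge propagator $V$, and each $V_k$ is at most \emph{linear} in $X$ (this is precisely the content of the $\mathbb{Q}$-span listed for $V_k$ in the proof, coming from the fact that $X$ occurs linearly in the $H_1H_2$-row of $R$ and nowhere else). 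Therefore $\deg_X\mathrm{Cont}_\Gamma\le |E(\Gamma)|$, and for a genus-$g$ stable graph with no markings one has $|E(\Gamma)|\le 3g-3$. That is the whole argument; no induction on $g$ or $\psi$-budget accounting is needed.
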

\begin{proof}
By Givental-Teleman theorem (c.f. \cite{teleman2012structure}), the genus $g$ generating funciton $F_{g}(\tau(q_1,q_2))$ equals to a graph sum formula  (c.f. \cite{pandharipande2015relations})
\[F_g(\tau(q_1,q_2))=\sum_{\Gamma\in G_{g,0}}\text{Cont}_{\Gamma}F_g\]
where $G_{g,0}$ is the set of all genus $g$, $0$ marking stable graphs. Below we will show
\[\text{Cont}_{\Gamma}F_g\in \mathbb{G}[P_1,P_2,P_3,P_4,X]\] and the degree of $X$ in the polynomial of $F_g$ is at most $|E(\Gamma)|$.

For general genus-$g$, $0$ marking  stable graph $\Gamma$, the associated contribution $\text{Cont}_{\Gamma}F_g$
 in Givental-Teleman graph sum formula  is
\begin{itemize}
\item{On each vertex, the contribution is
\[\omega_{g_v,n_v}(e_{\alpha\beta},...,e_{\alpha\beta})={\Delta_{\alpha\beta}}^{\frac{2g_v-2}{2}}
\]}
\item{On the kappa tails, the contribution is
\[z({\Psi_1}^{\alpha\beta}-{R(-z)_1}^{\alpha\beta})=z\cdot \sum_{k\geq1} T_k\]
where $T_k$ is a polynomial of $z$ with degree $k$.}
\item{On the edges, the contribution is
\begin{align*}
\langle V(z,w), e^{\alpha\beta}\otimes e^{\gamma\delta}\rangle=V_0+V_1+V_2+...
\end{align*}
where
\[V(z,w)=\sum_{\alpha,\beta}\frac{e_{\alpha\beta}\otimes e^{\alpha\beta}-R(z)^{-1}e_{\alpha\beta}\otimes R(w)^{-1} e^{\alpha\beta}}{z+w} \]
and $V_k$ is a homogeneous polynomial of $z,w$ with degree $k$.
}
\end{itemize}
Then by equation~\eqref{eq:norme} and Corollary~\ref{cor:R-ring}, we have
\begin{align*}
&\omega_{g_v,n_v}(e_{\alpha\beta},...,e_{\alpha\beta})
=(-2)^{3g_v-3}\left(\lambda^2L_{\alpha\beta}+\mu^2M_{\alpha\beta}\right)^{g_v-1}\in \mathbb{G}_{1,3g_v}
\\
&T_k \in \mathbb{G}_{3k,8k}
\end{align*}
and
\begin{align*}V_k\in&
\mathbb{Q}[z,w]_{\deg=k}\otimes_{\mathbb{Q}}\mathbb{G}_{3(k+1),8(k+1)+3}\otimes_{\mathbb{Q}}
\\&\mathbb{Q}\Bigg\{\frac{1}{1+\bar{I}_{11}^2},\frac{I_{11}^2(q_1,q_2)}{1+\bar{I}_{11}^2},\frac{1}{(1+\bar{I}_{11})^2(\tilde{I}_{22}(q_1,q_2)+\tilde{I}_{22}(q_2,q_1))},
\\&\hspace{15pt}\frac{\tilde{I}_{22}(q_1,q_2)}{(1+\bar{I}_{11})(\tilde{I}_{22}(q_1,q_2)+\tilde{I}_{22}(q_2,q_1))},\frac{\tilde{I}_{22}(q_2,q_1)}{(1+\bar{I}_{11})(\tilde{I}_{22}(q_1,q_2)+\tilde{I}_{22}(q_2,q_1))},\\
&\hspace{15pt}\frac{X}{(1+\bar{I}_{11})^2(\tilde{I}_{22}(q_1,q_2)+\tilde{I}_{22}(q_2,q_1))}\Bigg\}.
\end{align*}

Assume $V(\Gamma), E(\Gamma),T(\Gamma)$ are the set of vertices, edges, tails of the stable graph $\Gamma$.
\[\text{Cont}_{\Gamma}F_g=\frac{1}{|\text{Aut}(\Gamma)|}
\sum_{k_1,...,k_{|E(\Gamma)|}}\sum_{l_1,...,l_{|T(\Gamma)|}}c_{k_1,...,k_{|E(\Gamma)|};l_1,...,l_{|E(\Gamma)|}}\left(\prod_{v\in V(\gamma)}\omega_{g_v,n_v}\right)\left(\prod_{i=1}^{|E(\Gamma)|}V_{k_i}\right)\left(\prod_{j=1}^{|T(\Gamma)|}T_{l_j}\right)\]
where the constant $c_{k_1,...,k_{|E(\Gamma)|};l_1,...,l_{|E(\Gamma)|}}$ comes from  the integration of product of corresponding $\psi$ classes over $\overline{\mathcal{M}}_{g}$.
To make sure $c_{k_1,...,k_{|E(\Gamma)|};l_1,...,l_{|E(\Gamma)|}}$ is nonzero, we need requirements \begin{align}\label{eq:nonreq}\sum_{i=1}^{|E(\Gamma)|}{k_i}+\sum_{j=1}^{|T(\Gamma)|}l_j=3g-3-|E(\Gamma)|.\end{align}
Then the product of these factors in $\mathbb{G}_{\bullet,\bullet}$ gives a factor lies in $\mathbb{G}$. In fact,
under the requirement \eqref{eq:nonreq}, we have \[3\left(\sum_{v\in V(\Gamma)}1+\sum_{j=1}^{|T(\Gamma)|}3l_j+\sum_{i=1}^{|E(\Gamma)|}3(k_i+1)\right)
=\sum_{v\in V(\Gamma)}3g_v+\sum_{j=1}^{|T(\Gamma)|}8l_j+\sum_{i=1}^{E(\Gamma)}\left(8(k_i+1)+3\right).\]
The remains gives a product lies in the ring
\[ \mathbb{Q}[P_1,P_2,P_3,P_4,X]\] and the degree of $X$ in the polynomial of $F_g$ is at most $|E(\Gamma)|$.

Since for any stable graph $\Gamma$ of genus $g$, 0 marking, the number of edges $|E(\Gamma)|$ is at most 3g-3, we prove this theorem.
\end{proof}

\section{Holomorphic anomaly equation }
\label{sec:HAE}
In this section, we prove the holomorphic anomaly equation for the equivariant  Gromov-Witten theory of local $\mathbb{P}^1\times\mathbb{P}^1$.
\subsection{Derivatives of the full $R$ matrix}
The key point for the proof is the following proposition
\begin{proposition}
Formally we have the following derivatives of full $R$ and bi-vector $V$ w.r.t. $X$
\begin{align*}
\frac{d}{dX}\left({R(z)_{1}}^{\alpha\beta}\right)
=&0,
\\ \frac{d}{dX}\left({R(z)_{H_1}}^{\alpha\beta}
\right)=&0,
\\ \frac{d}{dX}\left({R(z)_{H_2}}^{\alpha\beta}\right)=&0,
\\  \frac{d}{dX}\left({R(z)_{H_1H_2}}^{\alpha\beta}
\right)=&-z\frac{1}{\tilde{I}_{22}(q_1,q_2)+\tilde{I}_{22}(q_2,q_1)}\left(
{R(z)_{H_1}}^{\alpha\beta}+{R(z)_{H_2}}^{\alpha\beta}\right),
\\
\frac{d}{dX}\langle V(z,w),e^{\alpha\beta}\otimes e^{\gamma\delta}\rangle
=&
-\frac{1}{\tilde{I}_{22}(q_1,q_2)+\tilde{I}_{22}(q_2,q_1)}\langle R(z)^{-1}(H_1+H_2),e^{\alpha\beta}\rangle \langle R(w)^{-1}(H_1+H_2), e^{\gamma\delta}\rangle.
\end{align*}
\end{proposition}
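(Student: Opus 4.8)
The plan is to read $\frac{d}{dX}$ as the formal partial derivative on the finitely generated ring $\mathbb{G}[P_1,P_2,P_3,P_4,X]$ furnished by Theorem~\ref{thm:finit-gen} and Corollary~\ref{cor:R-ring}, in which $X$ is treated as an independent generator. The first task is therefore to record that this formal derivative is well defined, i.e. that $X$ is algebraically independent over the subring $\mathbb{G}[P_1,P_2,P_3,P_4]$, so that the representation of each quantity as a polynomial in $X$ is unique; this is the content underlying the finite generation statement. Granting this, the first three identities are immediate: by Corollary~\ref{cor:R-ring} the $z$-coefficients of ${R(z)_1}^{\alpha\beta}$, ${R(z)_{H_1}}^{\alpha\beta}$ and ${R(z)_{H_2}}^{\alpha\beta}$ all lie in the $X$-free subring $\mathbb{G}[P_1,P_2,P_3,P_4]$, and $\frac{d}{dX}$ annihilates everything free of $X$.

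For the fourth identity I would differentiate the recursion $(iii)$ of Lemma~\ref{lem:Rrecursion} term by term. The key observation is that every summand on the right-hand side is $X$-free except the single explicit term $-z\,\tfrac{1}{\tilde{I}_{22}(q_1,q_2)+\tilde{I}_{22}(q_2,q_1)}\,X\,({R(z)_{H_1}}^{\alpha\beta}+{R(z)_{H_2}}^{\alpha\beta})$. Indeed, the scalar coefficients occurring there --- namely $P_1$, $P_3$, the logarithmic norm derivatives $\tfrac{q_i\frac{d}{dq_i}\|e_{\alpha\beta}\|}{\|e_{\alpha\beta}\|}\in\mathbb{G}_{2,6}$, the eigenvalues $L_{\alpha\beta},M_{\alpha\beta}$, the parameters $\lambda^2,\mu^2$, and the ratios $\tfrac{\tilde{I}_{22}(q_1,q_2)}{\tilde{I}_{22}(q_1,q_2)+\tilde{I}_{22}(q_2,q_1)}=P_4$ and its complement --- are all $X$-free. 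Moreover ${R(z)_1}^{\alpha\beta}\in\mathbb{G}$ and $\mathbb{G}$ is closed under $q_1\frac{d}{dq_1}$ and $q_2\frac{d}{dq_2}$ (these send the generators $L_{\alpha\beta},M_{\alpha\beta}$ into $\mathbb{G}_{1,4}$), so every $q$-derivative of ${R(z)_1}^{\alpha\beta}$ appearing in $(iii)$ is again $X$-free; crucially, ${R(z)_{H_1}}^{\alpha\beta}$ and ${R(z)_{H_2}}^{\alpha\beta}$ enter $(iii)$ \emph{without} $q$-derivatives, and are $X$-free by the previous paragraph. Consequently $\frac{d}{dX}$ kills all terms but the explicit one, and $\frac{d}{dX}$ of $-zP_3 X({R(z)_{H_1}}^{\alpha\beta}+{R(z)_{H_2}}^{\alpha\beta})$ is $-zP_3({R(z)_{H_1}}^{\alpha\beta}+{R(z)_{H_2}}^{\alpha\beta})$, which is the asserted formula.

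For the bi-vector $V$ I would first observe that the Casimir part $\sum_{\alpha\beta}e_{\alpha\beta}\otimes e^{\alpha\beta}$ is basis independent, hence $X$-free, so only the term $-\sum R(z)^{-1}e_{\alpha\beta}\otimes R(w)^{-1}e^{\alpha\beta}$ contributes. Applying Leibniz and $\frac{d}{dX}R^{-1}=-R^{-1}\big(\tfrac{d}{dX}R\big)R^{-1}$, I would feed in the $R$-matrix derivatives just established. By those identities $\frac{d}{dX}R(z)$ is the rank-one endomorphism concentrated in the $H_1H_2$ row, $\frac{d}{dX}R(z)=-zP_3\,(H_1H_2)\otimes\big((\phi^{H_1}+\phi^{H_2})\circ R(z)\big)$ (with $\phi^a$ the flat dual basis), whence $\frac{d}{dX}R(z)^{-1}=zP_3\,\big(R(z)^{-1}(H_1H_2)\big)\otimes(\phi^{H_1}+\phi^{H_2})$. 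Substituting into the two Leibniz terms and contracting the internal sum by completeness $\sum_{\alpha\beta}e_{\alpha\beta}\otimes e^{\alpha\beta}=\mathrm{Id}$ produces two rank-one tensors carrying prefactors $z$ and $w$ respectively; using the symplectic relation $R(z)R(-z)^{*}=\mathrm{Id}$ and the explicit pairing $\eta$ to identify both tensor slots with $R(\cdot)^{-1}(H_1+H_2)$, the two terms coincide as a single symmetric tensor, so the prefactor $z+w$ cancels the $\tfrac{1}{z+w}$ in the definition of $V$ and leaves the factorized expression claimed.

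The main obstacle I anticipate is this last cancellation: verifying that, after completeness and the symplectic property, the two contributions genuinely assemble into $C(z)\otimes C(w)$ with $C(z)=R(z)^{-1}(H_1+H_2)$, so that the $z+w$ denominator disappears. This requires carefully tracking the metric duality between the output class $H_1H_2$ of $\frac{d}{dX}R$ and the input covector $\phi^{H_1}+\phi^{H_2}$ through $\eta$ and through the $z\mapsto -z$ flip in $R(z)R(-z)^{*}=\mathrm{Id}$. By contrast, the four $R$-matrix identities are comparatively mechanical once the $X$-free structure of $R_1,R_{H_1},R_{H_2}$ and the closure of $\mathbb{G}$ under $q_i\frac{d}{dq_i}$ are in hand; the only other point needing care is the algebraic independence of $X$ that legitimizes the formal derivative in the first place.
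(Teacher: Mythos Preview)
Your treatment of the four $R$-matrix identities coincides with the paper's: it too invokes Corollary~\ref{cor:R-ring} for the $X$-freeness of ${R(z)_1}^{\alpha\beta},{R(z)_{H_1}}^{\alpha\beta},{R(z)_{H_2}}^{\alpha\beta}$ and reads off the single explicit $X$-term in Lemma~\ref{lem:Rrecursion}(iii) for the $H_1H_2$ row. (Your remark that one must know $X$ is algebraically independent over $\mathbb{G}[P_1,\dots,P_4]$ for $\frac{d}{dX}$ to be well defined is a fair foundational point; the paper simply says ``formally'' and does not address it.)

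For the bi-vector $V$ you take a genuinely different and harder route. The paper never uses $\frac{d}{dX}R^{-1}=-R^{-1}(\frac{d}{dX}R)R^{-1}$, completeness, or the symplectic relation $R(z)R(-z)^{*}=\mathrm{Id}$. Instead it rewrites the Casimir in the \emph{flat} basis via the explicit twisted pairing~\eqref{paring-matrix}: the dual basis of $\{1,H_1,H_2,H_1H_2\}$ is $-2\{\mu^2H_1+\lambda^2H_2,\ H_1H_2+\mu^2,\ H_1H_2+\lambda^2,\ H_1+H_2\}$, so $V(z,w)$ becomes a sum of tensors $R(z)^{-1}\phi\otimes R(w)^{-1}\phi^{\vee}$ with $\phi,\phi^{\vee}$ ranging over $\{1,H_1,H_2,H_1H_2\}$. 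The four identities just proved say that $\frac{d}{dX}$ kills $R^{-1}1,R^{-1}H_1,R^{-1}H_2$ and sends $R(\cdot)^{-1}(H_1H_2)$ to $-(\cdot)\,P_3\,R(\cdot)^{-1}(H_1+H_2)$. Hence only the two cross terms $R(z)^{-1}(H_1+H_2)\otimes R(w)^{-1}(H_1H_2)$ and its swap survive differentiation; one contributes a factor $-w$, the other $-z$, and their sum $-(z+w)$ cancels the $\tfrac{1}{z+w}$ in the definition of $V$ on the nose, yielding the factorized answer immediately. The ``main obstacle'' you anticipate --- arranging the $z+w$ cancellation through the symplectic $z\mapsto -z$ flip --- never arises in this framing. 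Your abstract approach is not wrong in spirit, but note that your rank-one description of $\frac{d}{dX}R(z)$ needs care: what the four identities literally give is that $\frac{d}{dX}$ of the matrix $(R(z)_{\phi}{}^{\alpha\beta})$ is supported on the \emph{input} $\phi=H_1H_2$ with \emph{output} proportional to the $H_1+H_2$ rows of $R(z)$, so the roles of $H_1H_2$ and $H_1+H_2$ in your tensor formula should be checked against whatever convention you adopt for $R$ versus $R^{-1}$ before the endgame can be carried through.
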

\begin{proof}
The first four equations about the derivatives of the $R$ matrix along $X$ just follows from the Lemma~\ref{lem:Rrecursion} and Corollary~\ref{cor:R-ring}.

To see the last equation, we need to express the bivector $V(z,w)$ in terms of $R$ matrix.
Via the paring matrix~\eqref{paring-matrix}, we get the dual basis of $\{1,H_1,H_2,H_1H_2\}$ is
\[-2\{\mu^2H_1+\lambda^2 H_2, H_1H_2+\mu^2, H_1H_2+\lambda^2, H_1+H_2\}.\]
So the bivector $V(z,w)$  equals to the following
\begin{align*}
V(z,w)=\frac{-2}{z+w}
\Bigg(&\bigg(1\otimes(\mu^2H_1+\lambda^2H_2)+(\mu^2H_1+\lambda^2H_2)\otimes 1
\\&+(H_1+H_2)\otimes H_1H_2+H_1H_2\otimes (H_1+H_2)\bigg)
\\&-\bigg(R(z)^{-1}1\otimes R(w)^{-1}(\mu^2 H_1+\lambda^2 H_2)+R(z)^{-1}(\mu^2 H_1+\lambda^2 H_2)\otimes R(w)^{-1}1
\\&+R(z)^{-1}(H_1+H_2)\otimes R(w)^{-1}(H_1H_2)+R(z)^{-1}(H_1H_2)\otimes R(w)^{-1}(H_1+H_2)\bigg)\Bigg)
\end{align*}
Then the identity of the derivative of $V$ w.r.t. $X$ follows from equations of the derivative of $R$ w.r.t X.
\end{proof}

\subsection{Finite generation property}
\begin{theorem}\label{thm:finit-gene}
The equivariant Gromov-Witten invariants of $K_{\mathbb{P}^1\times\mathbb{P}^1}$ satisfies the holomorphic anomaly equation
\begin{align}\label{eq:HAE-thm}
&\frac{d}{dX}F_g(\tau(q_1,q_2))
\\=&-\frac{1}{2\left(\tilde{I}_{22}(q_1,q_2)+\tilde{I}_{22}(q_2,q_1)\right)}
\left(\sum_{g_1+g_2=g}\langle\langle
H_1+H_2\rangle\rangle_{g_1,1}\langle\langle
H_1+H_2\rangle\rangle_{g_2,1}
+\langle\langle
H_1+H_2,H_1+H_2\rangle\rangle_{g-1,2}\right).
\nonumber\end{align}
\end{theorem}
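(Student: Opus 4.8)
The plan is to differentiate the Givental--Teleman graph sum for $F_g(\tau(q_1,q_2))$ term by term, feeding in the $X$-derivatives of the full $R$ matrix and of the bivector $V$ established in the preceding proposition, and then to reorganize the result as a sum over the two types of graphs obtained by cutting a single edge. First I would recall the expansion $F_g(\tau(q_1,q_2))=\sum_{\Gamma\in G_{g,0}}\mathrm{Cont}_\Gamma F_g$ used in the proof of Theorem~\ref{thm:finit-gen}, in which each $\mathrm{Cont}_\Gamma F_g$ is a product of vertex factors $\omega_{g_v,n_v}$, tail factors $z(\Psi_1^{\alpha\beta}-{R(-z)_1}^{\alpha\beta})$, and edge factors $\langle V(z,w),e^{\alpha\beta}\otimes e^{\gamma\delta}\rangle$. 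Since $\frac{d}{dX}$ is the formal derivation on $\mathbb{G}[P_1,P_2,P_3,P_4,X]$ that annihilates $\mathbb{G}$ and $P_1,\dots,P_4$, the preceding proposition shows that the vertex factors (which lie in $\mathbb{G}$ by the computation of $\omega_{g_v,n_v}$) and the tail factors (which depend only on ${R_1}^{\alpha\beta}$, whose $X$-derivative vanishes) are $X$-constant. Hence $\frac{d}{dX}$ hits one edge factor at a time, giving
\[\frac{d}{dX}F_g(\tau(q_1,q_2))=\sum_{\Gamma\in G_{g,0}}\frac{1}{|\mathrm{Aut}(\Gamma)|}\sum_{e\in E(\Gamma)}\left(\frac{d}{dX}\langle V(z,w),e^{\alpha\beta}\otimes e^{\gamma\delta}\rangle\right)\cdot\prod_{\text{factors}\neq e}(\cdots).\]

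The core of the argument is then the last formula of the proposition, which replaces the differentiated propagator on the edge $e$ by the rank-one tensor $-\frac{1}{\tilde{I}_{22}(q_1,q_2)+\tilde{I}_{22}(q_2,q_1)}\,\langle R(z)^{-1}(H_1+H_2),e^{\alpha\beta}\rangle\,\langle R(w)^{-1}(H_1+H_2),e^{\gamma\delta}\rangle$. I would interpret this tensor as \emph{cutting} the edge $e$: the insertion $R(z)^{-1}(H_1+H_2)$ appearing at each resulting half-edge is exactly the leg contribution that the Givental--Teleman formula assigns to a marked point carrying the class $H_1+H_2$. Two cases then occur. If $e$ is non-separating, cutting it turns $\Gamma$ into a connected stable graph of genus $g-1$ with two extra legs both labelled $H_1+H_2$; summing over all such $(\Gamma,e)$ reassembles the graph sum for the two-point correlator $\langle\langle H_1+H_2,H_1+H_2\rangle\rangle_{g-1,2}$. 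If $e$ is separating, cutting it splits $\Gamma$ into two stable graphs of genera $g_1,g_2$ with $g_1+g_2=g$, each with one extra leg labelled $H_1+H_2$; summing reassembles $\sum_{g_1+g_2=g}\langle\langle H_1+H_2\rangle\rangle_{g_1,1}\langle\langle H_1+H_2\rangle\rangle_{g_2,1}$. Pulling the common scalar $\tfrac{1}{\tilde{I}_{22}(q_1,q_2)+\tilde{I}_{22}(q_2,q_1)}$ out of both sums produces the overall prefactor in the stated equation.

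The step I expect to be the main obstacle is the combinatorial bookkeeping that matches the sum over pairs $(\Gamma,e)$ against the sum over cut graphs, and that produces exactly the factor $\tfrac12$ in front of the right-hand side. Concretely, I would need to verify that every cut graph contributing to $\langle\langle H_1+H_2,H_1+H_2\rangle\rangle_{g-1,2}$ or to the genus-splitting product is recovered from some $(\Gamma,e)$ with the correct multiplicity, and that the automorphism weights $\tfrac{1}{|\mathrm{Aut}(\Gamma)|}$ combine with the two ways of regluing the half-edges (equivalently, the symmetrization of the two new markings) to yield precisely $\tfrac12$. Care is also needed when cutting leaves an a priori unstable vertex, since those must be reabsorbed into the standard stable-graph indexing of the target correlators. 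All of this is the usual edge-degeneration reorganization in the graph-sum proof of holomorphic anomaly equations, so the remaining work is bookkeeping rather than any new analytic input beyond the $X$-derivative formulas already in hand.
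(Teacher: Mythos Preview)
Your proposal is correct and follows essentially the same route as the paper's own proof: differentiate the Givental--Teleman graph sum, use the preceding proposition to see that only the edge factors depend on $X$, replace the differentiated propagator by the rank-one tensor coming from $\frac{d}{dX}V$, and reinterpret this as cutting an edge to reassemble the $(g-1,2)$ correlator and the genus-splitting product. Your discussion of the bookkeeping (automorphisms and the factor $\tfrac12$) is in fact more explicit than what the paper writes down; the worry about unstable vertices is harmless here, since replacing a half-edge by a leg preserves the valence and hence stability.
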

\begin{proof}
In the proof of Theorem~\ref{thm:finit-gen}, we have proved that,  by Givental-Teleman theorem, the genus $g$ generating funciton $F_{g}(\tau(q_1,q_2))$ equals to a graph sum formula
\[F_g(\tau(q_1,q_2))=\sum_{\Gamma\in G_{g,0}}\text{Cont}_{\Gamma}F_g\]
here
\[\text{Cont}_{\Gamma}F_g\in \mathbb{G}[P_1,P_2,P_3,P_4,X]\] and the degree of $X$ in the polynomial of $F_g$ is at most $|E(\Gamma)|$.

Recall that  the associated contribution $\text{Cont}_{\Gamma}F_g$
 in Givental-Teleman graph sum formula  is
\begin{itemize}
\item{On each vertex, the contribution is
\[\omega_{g_v,n_v}(e_{\alpha\beta},...,e_{\alpha\beta})={\Delta_{\alpha\beta}}^{\frac{2g_v-2}{2}}
\]}
\item{On the kappa tails, the contribution is
\[z({\Psi_1}^{\alpha\beta}-{R(-z)_1}^{\alpha\beta})\]
}
\item{On the edges, the contribution is
\begin{align*}
\langle V(z,w), e^{\alpha\beta}\otimes e^{\gamma\delta}\rangle.
\end{align*}
}
\end{itemize}

As is in the proof of theorem~\ref{thm:finit-gen},
\[\text{Cont}_{\Gamma}F_g=\frac{1}{|\text{Aut}(\Gamma)|}
\sum_{k_1,...,k_{|E(\Gamma)|}}\sum_{l_1,...,l_{|T(\Gamma)|}}c_{k_1,...,k_{|E(\Gamma)|};l_1,...,l_{|E(\Gamma)|}}\left(\prod_{v\in V(\gamma)}\omega_{g_v,n_v}\right)\left(\prod_{i=1}^{|E(\Gamma)|}V_{k_i}\right)\left(\prod_{j=1}^{|T(\Gamma)|}T_{l_j}\right).\]
Taking derivatives along the generator $X$, by Lebniz's rule,  we get
\begin{align*}
&\frac{\partial}{\partial X}\text{Cont}_{\Gamma}F_g(\tau(q_1,q_2))
\\=&-\frac{1}{2\left(\tilde{I}_{22}(q_1,q_2)+\tilde{I}_{22}(q_2,q_1)\right)}\Bigg(\sum_{\Gamma'\in G_{g-1,2}^{\Gamma}}\text{Cont}_{\Gamma}\langle\langle H_1+H_2,H_1+H_2\rangle\rangle_{g-1,2}
\\&\hspace{140pt}+\sum_{(\Gamma'',\Gamma''')\in (G_{g_1,1}\times G_{g_2,1})^{\Gamma}}\langle\langle H_1+H_2\rangle\rangle_{g_1,1}\langle\langle H_1+H_2\rangle\rangle_{g_2,1}\Bigg).
\end{align*}
where $G_{g-1,2}^{\Gamma}$ is the set of stable graphs with genus $g$, 2 markings, which becomes the stable graph $\Gamma$ under the gluing map
$\iota_1: \overline{M}_{g-1,2}\rightarrow \overline{M}_{g}$.  $(G_{g_1,1}\times G_{g_2,1})^{\Gamma}$ is the set of two stable graphs with genus $g_1$, 1 marking and genus $g_2$, 1 marking, which becomes the stable graph $\Gamma$ under the gluing map
$\iota_2: \overline{M}_{g_1,1}\times \overline{M}_{g_2,1}\rightarrow \overline{M}_{g}$. Then summing over all stable graph of genus $g$, 0 marking, we get the holomorphic anomaly equation~\eqref{eq:HAE-thm}.
\end{proof}

\section{Oscillatory integral and Feymann diagram  representation of $R$ matrix }\label{sec:osc-int}
In the last section, we
study the associated oscillatory integral of $K_{\mathbb{P}^1\times\mathbb{P}^1}$ and give a Feymann diagram representation of the $R$ matrix, which provides a directly combinatorial way to compute the $R$ matrix.

Recall that the charge matrix of the toric variety $K_{\mathbb{P}^1\times\mathbb{P}^1}$ is
\[\begin{pmatrix}
1&1&0&0&-2\\
0&0&1&1&-2
\end{pmatrix}\]
The LG potential is defined by
\[W=\sum_{i=0}^{4}(x_i+\lambda_i\ln x_i): (\mathbb{C}^*)^5\rightarrow \mathbb{C}\]
In the following, we choose the  specialization of equivariant parameter $\lambda_0=-\lambda_1=\lambda$ and $\lambda_2=-\lambda_3=\mu$, $\lambda_4=0$.

The associated oscillatory integral is of the form
\[\int_{\Gamma}\exp(\frac{W}{z})\frac{d\ln x_0...d\ln x_4}{d\ln q_1 d\ln q_2}\]
where $\Gamma$ is a subset in as sub-torus in $(\mathbb{C}^*)^5$, the sub-torus is given by the equations
\[\left\{\begin{aligned}
x_0x_1{x_4}^{-2}&=&q_1
\\x_2x_3{x_4}^{-2}&=&q_2.
\end{aligned}
\right.\]
Then the oscillatory integral becomes
\begin{align*}
\frac{1}{2}\int_{\Gamma}\exp(\frac{W}{z})d\ln x_0d\ln x_1d\ln x_2
\end{align*}
where
\begin{align*}
W|_{\Gamma}=x_0+x_1+x_2+\frac{q_2}{q_1}\frac{x_0x_1}{x_2}
+\left(\frac{x_0x_1}{q_1}\right)^{\frac{1}{2}}
+\lambda_0 \ln x_0+\lambda_1\ln x_1+\lambda_2\ln x_2+\lambda_3\ln (\frac{q_2}{q_1}\frac{x_0x_1}{x_2}).
\end{align*}
The critical points of $W|_{\Gamma}$ is determined by the equation
\[ \left\{
\begin{aligned}
x_0+\lambda_0+x_3+\lambda_3+\frac{1}{2}x_4 & = & 0 \\
x_1+\lambda_1+x_3+\lambda_3+\frac{1}{2}x_4 & = & 0 \\
x_2+\lambda_2-(x_3+\lambda_3) & = & 0\\
x_0x_1{x_4}^{-2}&=&q_1\\
x_2x_3{x_4}^{-2}&=&q_2
\end{aligned}
\right.
\]Now we can assume $x_2+\lambda_2=x_3+\lambda_3=\check{L}$, $x_0+\lambda_0=x_1+\lambda_1=\check{M}$, then $x_4=-2(\check{M}+\check{L})$, where $(\check{M},\check{L})$ is solutions of the equations
\[\left\{
\begin{aligned}
{\check{M}}^2-\lambda^2& = & q_1(2(\check{M}+\check{L}))^2 \\
{\check{L}}^2-\mu^2& = & q_2(2(\check{M}+\check{L}))^2
\end{aligned}
\right.\]
Note that
 for general $\lambda$ and  $\mu$, there are 4 different solutions $\big\{(M_{\alpha\beta},L_{\alpha\beta})|\alpha,\beta\in\{0,1\}\big\}$.
\begin{definition}
A {\it Feymann diagram with $m$ external vertices} is a graph
(possibly, with loops and multiple edges) with $m$ vertices of
degree 1 labeled by $1, 2, . . . , m$ and finitely many unlabeled
(internal) vertices of degrees $\geq 3$.
\end{definition}
Let $G^{\geq3,\bullet}(l)$ denotes isomorphism classes of  the possibly disconnected Feymann diagram with $l$ external vertices. For any Feymann diagram $\Gamma\in G^{\geq3,\bullet}(l)$,  $\text{Aut}(\Gamma)$  denotes number of  automorphisms of  $\ \Gamma$ leaving the external vertices fixed.
\begin{proposition}
\label{prop:R-Fey}
The first column of $R$ matrix has the following Feymann diagram representation
\begin{align}\label{eq:R-Feymann}
{R(z)_1}^{\alpha\beta}=1+\sum_{k\geq1}\sum_{l\geq0}(-z)^{k}\sum_{\emptyset\neq\Gamma\in G_{k}^{\geq3,\bullet}(l)}\frac{1}{|\text{Aut}(\Gamma)|}\text{Cont}_{\Gamma}
\left(\frac{1}{ -x^{(\alpha\beta)}_0},\frac{1}{ -x^{(\alpha\beta)}_1},\frac{1}{ -x^{(\alpha\beta)}_2}\right)
\end{align}
where
$$G_{k}^{\geq3,\bullet}(l)
=\left\{\Gamma\in G^{\geq3,\bullet}(l): |E(\Gamma)|-|V_{\text{internal}}|=k\right\}$$
is the subset of $G^{\geq3,\bullet}(l)$  whose number of edges minus the number of internal vertices  equals to $k$.
The contribution of each Feymann diagram $\text{Cont}_{\Gamma}
\left(\frac{1}{ -x^{(\alpha\beta)}_0},\frac{1}{ -x^{(\alpha\beta)}_1},\frac{1}{ -x^{(\alpha\beta)}_2}\right)$ is given by
\begin{itemize}
    \item{On the $j$-th external vertex, we place the linear form $\frac{\xi_{i_j}}{-x^{(\alpha\beta)}_{i_j}}$, where $\frac{\xi_{i_j}}{-x^{(\alpha\beta)}_{i_j}}$ is the $j$-th factor of degree $l$ monomial in the multi-taylor expansion $\frac{1}{\left(1+\frac{\xi_0}{x_0^{(\alpha\beta)}}\right)\left(1+\frac{\xi_1}{x_1^{(\alpha\beta)}}\right)\left(1+\frac{\xi_2}{x_2^{(\alpha\beta)}}\right)}$. }
    \item{On each $m$ valent internal vertex, we place polylinear form $-W_m$.}
    \item{On the edges, we take the contraction of these forms by $\text{Hess}(W)^{-1}$ along the each edge.}
    \end{itemize}
\end{proposition}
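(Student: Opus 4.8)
The plan is to obtain the formula as the stationary-phase (steepest-descent) asymptotic expansion of the oscillatory integral around the critical point $(M_{\alpha\beta},L_{\alpha\beta})$, and then to identify this expansion with the first column of Givental's $R$-matrix via mirror symmetry. By Givental's mirror theorem for the toric target $K_{\mathbb{P}^1\times\mathbb{P}^1}$, the flat sections of the quantum connection are represented by the integrals $\tfrac12\int_\Gamma \exp(W/z)\,d\ln x_0\,d\ln x_1\,d\ln x_2$ over the Lefschetz thimbles attached to the critical points; the four critical points $(M_{\alpha\beta},L_{\alpha\beta})$ correspond to the idempotents $\mathfrak{e}_{\alpha\beta}$, and the asymptotic expansion of the thimble integral associated to $(M_{\alpha\beta},L_{\alpha\beta})$ produces, after factoring out the leading Gaussian normalization and the exponential $e^{u_{\alpha\beta}/z}$, precisely the series ${R(z)_1}^{\alpha\beta}$.

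First I would center the integral at the critical point by setting $x_i=x_i^{(\alpha\beta)}+\xi_i$ and Taylor-expanding both $W|_\Gamma$ and the measure. Writing $W|_\Gamma=u_{\alpha\beta}+\tfrac12\mathrm{Hess}(W)(\xi,\xi)+\sum_{m\geq3}\tfrac1{m!}W_m(\xi,\dots,\xi)$ and $\prod_i\frac{dx_i}{x_i}=\prod_i\frac{1}{x_i^{(\alpha\beta)}}\cdot\prod_i\frac{1}{1+\xi_i/x_i^{(\alpha\beta)}}\,d\xi_i$, the integral becomes a Gaussian integral with a cubic-and-higher perturbation in the exponent and a rational source coming from the measure. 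Applying Wick's theorem to the Gaussian average organizes the expansion into a sum over graphs: the propagator along each edge is $\mathrm{Hess}(W)^{-1}$, each internal vertex of valence $m\geq3$ carries the tensor $-W_m$, and the $l$ external univalent vertices carry the linear forms read off from the degree-$l$ term of $\prod_i(1+\xi_i/x_i^{(\alpha\beta)})^{-1}$, i.e. the factors $\tfrac{1}{-x_{i_j}^{(\alpha\beta)}}$. This reproduces exactly the three Feynman rules in the statement.

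Next I would perform the power counting in $z$. After rescaling $\xi=\sqrt{z}\,\eta$ the quadratic part becomes a $z$-independent Gaussian, and Wick's theorem pairs up all the field insertions $\eta$ coming from the internal vertices and the external legs. A diagram with $E$ edges thus has $2E=\sum_a m_a+l$ field insertions, each carrying a factor $\sqrt z$, while each of the $I$ internal vertices carries a factor $z^{-1}$ from the expansion of $\exp(\tfrac1z\sum_{m\geq3}\tfrac{W_m}{m!})$; the net power is $z^{E-I}=z^{|E(\Gamma)|-|V_{\mathrm{internal}}|}$. This is exactly the grading $k$ defining $G_k^{\geq3,\bullet}(l)$ (in particular the total valence is even, so only integer powers of $z$ survive). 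The factor $1/|\mathrm{Aut}(\Gamma)|$ is the standard symmetry count of Wick contractions, and the combined signs of the propagator and the $-W_m$ vertices produce the prefactor $(-z)^k$.

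Finally I would identify the resulting asymptotic series with ${R(z)_1}^{\alpha\beta}$. The cleanest route is uniqueness: the thimble integral solves the Picard–Fuchs system \eqref{eq:PF}, hence its normalized asymptotic expansion solves the same quantum differential equation \eqref{eq:qde-R} that characterizes the $R$-matrix; since both are asymptotic fundamental solutions with leading term $1$ (the $l=0$, empty-graph contribution), they must coincide. The step I expect to be the main obstacle is precisely this identification rather than the combinatorics: one must handle the logarithmic terms $\lambda_i\ln x_i$ in $W$, which make $W$ multivalued and feed into both $\mathrm{Hess}(W)$ and the higher vertex tensors $W_m$, and one must match the leading Gaussian normalization together with all signs against Givental's conventions for $\Psi$ and $R$. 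As an independent cross-check one can verify that the graph sum on the right-hand side of \eqref{eq:R-Feymann} satisfies the recursion of Lemma~\ref{lem:Rrecursion}, which together with the normalization ${R(z)_1}^{\alpha\beta}=1+O(z)$ pins down the first column uniquely.
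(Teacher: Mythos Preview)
Your proposal is correct and follows essentially the same approach as the paper: center the oscillatory integral at the critical point, expand by steepest descent, apply Wick's theorem to obtain the Feynman-diagram expansion with the stated rules, and identify the result with ${R(z)_1}^{\alpha\beta}$. The only minor difference is in the identification step: the paper invokes directly Givental's theorem relating the $R$-matrix to the asymptotic expansion of the oscillatory integral for smooth toric varieties, whereas you propose as primary route a uniqueness argument via the quantum differential equation \eqref{eq:qde-R}; both are valid and lead to the same conclusion.
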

\begin{proof}
Firstly,  using saddle point method, we compute the oscillatory integral
\begin{small}\begin{align}
\label{eqn:osi-int}
&\frac{1}{2}\int_{\Gamma}\exp(\frac{W}{z})d\ln x_0d\ln x_1d\ln x_2
\\=&\frac{1}{2}e^{\frac{\check{u}_{\alpha\beta}}{z}}\int_{\Gamma_{\alpha\beta}}
\exp\left(\frac{1}{z}\left(\frac{1}{2!}\sum_{i,j=0}^{2}
\partial_{i}\partial_{j}W|_{x^{(\alpha\beta)}}\xi_i\xi_j
+\frac{1}{k!}\sum_{i_1,...,i_k=0}^{2}\partial_{i_1}...\partial_{i_k}W|_{x^{(\alpha\beta)}}
\xi_{i_1}...\xi_{i_k}\right)\right)\frac{d\xi_0d\xi_1d\xi_2}{
(\xi_0+x_0^{(\alpha\beta)})(\xi_1+x_1^{(\alpha\beta)})(\xi_2+x_2^{(\alpha\beta)})}\nonumber
\\=&(\sqrt{-2\pi z})^3e^{\frac{\check{u}_{\alpha\beta}}{z}}
\left(\frac{1}{2}\frac{1}{x^{(\alpha\beta)}_0x^{(\alpha\beta)}_1
 x^{(\alpha\beta)}_2}
 \frac{1}{\sqrt{\text{Hess}(\partial^2W)}}\right)
 \frac{\sqrt{\text{Hess}(\partial^2W)}}{\sqrt{(2\pi)^3}}\nonumber
 \\&\int_{\tilde{\Gamma_{\alpha\beta}}
}
e^{-\frac{1}{2}\sum_{i,j=0}^{2}\partial_{i}\partial_{j}W(\{x^{(\alpha\beta)}\})\tilde{\xi}_i\tilde{\xi}_j}
e^{-\sum_{k\geq3}\sum_{I=(i_1,...,i_k)}
\frac{(-z)^{\frac{k}{2}-1}}{k!}\partial_{I}W(\{x^{(\alpha\beta)}\})\tilde{\xi}_{I}}
\frac{
d\tilde{\xi}_0\wedge d\tilde{\xi}_1\wedge d\tilde{\xi}_2}{(1+\sqrt{-z}\frac{\tilde{\xi}_0}{ x^{(\alpha\beta)}_0})(1+\sqrt{-z}\frac{\tilde{\xi}_1}{ x^{(\alpha\beta)}_1})
(1+\sqrt{-z}\frac{\tilde{\xi}_2}{x^{(\alpha\beta)}_2})}\nonumber
\\=&(\sqrt{-2\pi z})^3e^{\frac{\check{u}_{\alpha\beta}}{z}}\Psi_{1;\bar{\alpha\beta}}{R(z)_1}^{\alpha\beta}.\nonumber
\end{align}
\end{small}
Here the last step just  follows from the well known relations between $R$ matrix and asymptotic expansions of oscillatory integral of smooth toric variety (c.f. \cite{givental2001gromov}).
By the oscillatory integral \eqref{eqn:osi-int},  the norm of quantum canonical basis: \begin{align}\label{eq:norme}
{\Delta_{\alpha\beta}}^{-\frac{1}{2}}
=\Psi_{1;\bar{(\alpha,\beta)}}
=\frac{1}{2}\frac{1}{x_0^{(\alpha\beta)}x_1^{(\alpha\beta)}x_2^{(\alpha\beta)}}\frac{1}{\sqrt{\det\left(\text{Hess}(\partial^2W)\right)}}
=\frac{1}{2}\frac{1}{\sqrt{-2L_{\alpha\beta}\lambda^2-2M_{\alpha\beta}\mu^2}}=\|e_{\alpha\beta}\|
.\end{align}

Then by Wick's theorem (c.f. \cite{Etingof02}), , we have the following Feymann diagram expansion
\begin{small}\begin{align*}
&{R(z)_1}^{\alpha\beta}
\\=&\frac{\sqrt{\text{Hess}(\partial^2W)}}{\sqrt{(2\pi)^3}}\sum_{n_0,n_1,n_2\geq0}(\sqrt{-z})^{n_0+n_1+n_2}
\\&\int_{{\mathbb{R}}^3}
\left(\frac{\tilde{\xi}_0}{-x^{(\alpha\beta)}_0}\right)^{n_0}
\left(\frac{\tilde{\xi}_1}{-x^{(\alpha\beta)}_1}\right)^{n_1}
\left(\frac{\tilde{\xi}_2}{-x^{(\alpha\beta)}_2}\right)^{n_2}
e^{-\frac{1}{2}\sum_{i,j=0}^{2}\partial_{i}\partial_{j}W(\{x^{(\alpha\beta)}\})\tilde{\xi}_i\tilde{\xi}_j}
e^{-\sum_{k\geq3}\sum_{I=(i_1,...,i_k)}
\frac{(-z)^{\frac{k}{2}-1}}{k!}\partial_{I}W(\{x^{(\alpha\beta)}\})\tilde{\xi}_{I}}
d\tilde{\xi}_0\wedge d\tilde{\xi}_1\wedge d\tilde{\xi}_2
\\=&1+\sum_{k\geq1}\sum_{l\geq0}(-z)^{g}\sum_{\emptyset\neq\Gamma\in G_{k}^{\geq3,\bullet}(l)}\frac{1}{|\text{Aut}(\Gamma)|}\text{Cont}_{\Gamma}
\left(\frac{1}{ -x^{(\alpha\beta)}_0},\frac{1}{ -x^{(\alpha\beta)}_1},\frac{1}{ -x^{(\alpha\beta)}_2}\right).
\end{align*}
\end{small}
Here the contribution $\text{Cont}_{\Gamma}
\left(\frac{1}{ -x^{(\alpha\beta)}_0},\frac{1}{ -x^{(\alpha\beta)}_1},\frac{1}{ -x^{(\alpha\beta)}_2}\right)$ is exactly as stated in Proposition~\ref{prop:R-Fey}.
\end{proof}
\begin{remark}
For the other columns of $R$ matrix, we can use quantum differential equation to  get their Feymann diagram representations like equation~\eqref{eq:R-Feymann}.
\end{remark}
As an example, we can  compute the first column of $R_1$ matrix directly.
\begin{example}In the expression of ${(R_1)_1}^{\alpha\beta}$, there are  5 Feymann diagrams as follows:

\includegraphics[width=5.5in]{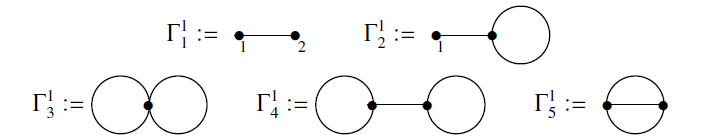}
\\Direct computations show for any $\alpha,\beta\in\{0,1\}$
\begin{small}\begin{align*}
&{(R_{1})_{1}}^{\alpha\beta}(q_1,q_2,\lambda,\mu)
\\=&\frac{1}{48(L_{\alpha\beta}\lambda^2+M_{\alpha\beta}\mu^2)^3}
\Bigg(9(L_{\alpha\beta}
\lambda^2+M_{\alpha\beta}\mu^2)\left(
\frac{1}{3}\lambda^4 L_{\alpha\beta}+\frac{1}{3} M_{\alpha\beta}\mu^4+L_{\alpha\beta}^2 M_{\alpha\beta}\lambda^2+L_{\alpha\beta} M_{\alpha\beta}^2\mu^2-2L_{\alpha\beta}\lambda^2\mu^2-2M_{\alpha\beta}\lambda^2\mu^2\right)
\\&\hspace{100pt}-5\lambda^2\mu^2({\lambda}^2-\mu^2)^2\Bigg).
\end{align*}
\end{small}
\end{example}


In general, we have the following proposition
\begin{proposition}\label{pro:Rstar1}
For $k\geq1$,  we have
\[{(R_k)_{1}}^{\alpha,\beta}(q_1,q_2,\lambda,\mu)
\in \frac{1}{(\lambda^2 L_{\alpha\beta}+\mu^2 M_{\alpha\beta})^{3k}}\cdot \mathbb{Q}\left[M_{\alpha\beta},L_{\alpha\beta},\lambda,\mu\right]_{\text{deg}=8k}\]
moreover,
for each Feymann diagram $\Gamma\in G_{k,l}^{\geq3}$, we have the contribution of $\Gamma$ in equation~\eqref{eq:R-Feymann},
\[\text{Cont}_{\Gamma}
\left(\frac{1}{ -x^{(\alpha\beta)}_0},\frac{1}{ -x^{(\alpha\beta)}_1},\frac{1}{ -x^{(\alpha\beta)}_2}\right)\in \frac{1}{(\lambda^2 L_{\alpha\beta}+\mu^2 M_{\alpha\beta})^{3k}}\cdot \mathbb{Q}\left[M_{\alpha\beta},L_{\alpha\beta},\lambda,\mu\right]_{\text{deg}=8k}\]

\end{proposition}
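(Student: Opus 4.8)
The plan is to prove the per-diagram statement (the "moreover") first and then deduce the statement for ${(R_k)_1}^{\alpha\beta}$ by summing over the finitely many diagrams contributing at order $k$. Throughout I assign weight $1$ to each of $M_{\alpha\beta},L_{\alpha\beta},\lambda,\mu$. From the critical-point equations the coordinates are
\[x_0^{(\alpha\beta)}=M_{\alpha\beta}-\lambda,\quad x_1^{(\alpha\beta)}=M_{\alpha\beta}+\lambda,\quad x_2^{(\alpha\beta)}=L_{\alpha\beta}-\mu,\quad x_3^{(\alpha\beta)}=L_{\alpha\beta}+\mu,\quad x_4^{(\alpha\beta)}=-2(M_{\alpha\beta}+L_{\alpha\beta}),\]
which are homogeneous of weight $1$ and, crucially, are \emph{polynomials}. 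The first observation I would record is that every term of $W|_\Gamma$ is a constant times a Laurent monomial in $x_0,x_1,x_2$ whose value at the critical point equals one of the $x_i^{(\alpha\beta)}$ above (this is exactly the content of the constraint equations $x_3=\tfrac{q_2}{q_1}\tfrac{x_0x_1}{x_2}$ and $x_4=(\tfrac{x_0x_1}{q_1})^{1/2}$). Consequently each higher derivative of such a monomial at the critical point has the shape $\frac{(\text{degree-}1\text{ polynomial})}{x_{i_1}^{(\alpha\beta)}\cdots x_{i_m}^{(\alpha\beta)}}$, with no further $x$-denominators hidden in the numerator; so every vertex tensor $-W_m$ from Proposition~\ref{prop:R-Fey} has weight $1-m$ and a uniform denominator $x_{i_1}^{(\alpha\beta)}\cdots x_{i_m}^{(\alpha\beta)}$, one factor per incident half-edge.

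The technical heart is to control these $x_i^{(\alpha\beta)}$-denominators. I would pass to logarithmic coordinates $u_i=\ln x_i$, in which the Hessians are related at the critical point by $\mathrm{Hess}^{\log}_{ij}=x_i^{(\alpha\beta)}x_j^{(\alpha\beta)}\,\mathrm{Hess}^{x}_{ij}$, the correction $\delta_{ij}x_i\partial_{x_i}W$ vanishing since $\partial_{x_i}W=0$ at a critical point. Combined with the identity $(x_0x_1x_2)^2\det\mathrm{Hess}^{x}=-2(\lambda^2 L_{\alpha\beta}+\mu^2 M_{\alpha\beta})$ extracted from \eqref{eq:norme}, this yields $\det\mathrm{Hess}^{\log}=-2(\lambda^2 L_{\alpha\beta}+\mu^2 M_{\alpha\beta})$, and after inversion the clean formula $(\mathrm{Hess}^{x})^{-1}_{ij}=x_i^{(\alpha\beta)}x_j^{(\alpha\beta)}(\mathrm{Hess}^{\log})^{-1}_{ij}$ with $(\mathrm{Hess}^{\log})^{-1}_{ij}=\frac{(\text{degree-}2\text{ polynomial})}{-2(\lambda^2 L_{\alpha\beta}+\mu^2 M_{\alpha\beta})}$. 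Thus each propagator carries \emph{exactly one} factor $\frac{1}{\lambda^2 L_{\alpha\beta}+\mu^2 M_{\alpha\beta}}$ and contributes a factor $x_a^{(\alpha\beta)}$ and $x_b^{(\alpha\beta)}$ in the \emph{numerator}, one per edge-end.

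With this in hand the $x$-denominators cancel edge-by-edge: at every half-edge the factor $x^{-1}$ coming from the vertex form ($-W_m$ at an internal vertex, or $\frac{1}{-x_i}$ at an external vertex) is matched by the numerator factor $x$ supplied by the adjacent end of the propagator, so the total $x$-power is $2E-(\sum_v m_v)-l=2E-(2E-l)-l=0$ (here $E=|E(\Gamma)|$, $l=|T(\Gamma)|$, and $\sum_v m_v=2E-l$). Hence $\mathrm{Cont}_\Gamma=\frac{P}{(\lambda^2 L_{\alpha\beta}+\mu^2 M_{\alpha\beta})^{E}}$ with $P\in\mathbb{Q}[M_{\alpha\beta},L_{\alpha\beta},\lambda,\mu]$. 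A weight count (external legs $-1$, internal $m$-valent vertices $1-m$, propagators $+1$) gives total weight $-k$, so $\deg P=3E-k$. The valence condition $m_v\geq3$ forces $3V_{\text{internal}}\leq\sum_v m_v\leq 2E$, whence $E\leq 3(E-V_{\text{internal}})=3k$; multiplying numerator and denominator by $(\lambda^2 L_{\alpha\beta}+\mu^2 M_{\alpha\beta})^{3k-E}$ rewrites $\mathrm{Cont}_\Gamma$ with denominator $(\lambda^2 L_{\alpha\beta}+\mu^2 M_{\alpha\beta})^{3k}$ and numerator of degree $(3E-k)+3(3k-E)=8k$, proving the per-diagram claim. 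Summing over the finitely many $\Gamma\in G_k^{\geq3,\bullet}(l)$ and over $l$ (finite, since $E\leq 3k$ bounds the graph) then gives the statement for ${(R_k)_1}^{\alpha\beta}$ in \eqref{eq:R-Feymann}.

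The step I expect to be the main obstacle is establishing the complete, edge-by-edge cancellation of the $x_i^{(\alpha\beta)}$-factors; this is exactly what guarantees that $P$ is a genuine polynomial, with no spurious poles along $M_{\alpha\beta}=\pm\lambda$, $L_{\alpha\beta}=\pm\mu$ or $M_{\alpha\beta}+L_{\alpha\beta}=0$, rather than merely a Laurent expression. It rests on two structural facts that I would isolate as preliminary lemmas: the uniform denominator shape of the vertex derivatives (from the monomial form of $W|_\Gamma$) and the change-of-variables identity $(\mathrm{Hess}^{x})^{-1}_{ij}=x_i^{(\alpha\beta)}x_j^{(\alpha\beta)}(\mathrm{Hess}^{\log})^{-1}_{ij}$ together with $\det\mathrm{Hess}^{\log}=-2(\lambda^2 L_{\alpha\beta}+\mu^2 M_{\alpha\beta})$.
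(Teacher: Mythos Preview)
Your argument is correct and follows essentially the same route as the paper's own proof: both isolate the two structural facts
\[
\frac{1}{x_i^{(\alpha\beta)}x_j^{(\alpha\beta)}}\bigl(\mathrm{Hess}(W)^{-1}\bigr)_{ij}\Big|_{x^{(\alpha\beta)}}\in\frac{\mathbb{Q}[\lambda,\mu,M_{\alpha\beta},L_{\alpha\beta}]_{\deg=2}}{\lambda^2 L_{\alpha\beta}+\mu^2 M_{\alpha\beta}},\qquad
x_{i_1}^{(\alpha\beta)}\cdots x_{i_m}^{(\alpha\beta)}\,\partial_{i_1}\cdots\partial_{i_m}W\big|_{x^{(\alpha\beta)}}\in\mathbb{Q}\{\lambda,\mu,M_{\alpha\beta},L_{\alpha\beta}\},
\]
and then do the identical edge/vertex degree count $\deg P=2E+V_{\mathrm{int}}=3E-k$ together with the bound $E\le 3k$ to reach $\deg=8k$ over $(\lambda^2 L_{\alpha\beta}+\mu^2 M_{\alpha\beta})^{3k}$. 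Your presentation is a bit more explicit in two places where the paper is terse: you derive the propagator shape via the logarithmic Hessian identity $(\mathrm{Hess}^x)^{-1}_{ij}=x_ix_j(\mathrm{Hess}^{\log})^{-1}_{ij}$ rather than by ``direct computation,'' and you spell out the half-edge bookkeeping that guarantees the $x_i^{(\alpha\beta)}$-factors cancel exactly (so that $P$ is a genuine polynomial), whereas the paper simply asserts that ``the remains contribute a homogeneous polynomial.'' One minor slip: in your bound $3V_{\mathrm{internal}}\le\sum_v m_v\le 2E$ the right-hand expression is actually an equality $\sum_v m_v=2E-l$ (which you used a few lines earlier); since $2E-l\le 2E$ the conclusion $E\le 3k$ is unaffected.
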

\begin{proof}
For any Feymann diagram $\Gamma\in G_{k,l}^{\geq3}$,
direct computations show that for any $i,j\in\{0,1,2\}$,
\[\frac{1}{x_i\cdot x_j}\cdot{\text{Hess}(W)^{-1}}_{ij}|_{x^{(\alpha\beta)}}
\in \frac{\mathbb{Q}[\lambda,\mu,x_3^{(\alpha\beta)},x_4^{(\alpha\beta)}]_{\hom.\deg=2}}{\lambda^2 L_{\alpha\beta}+\mu^2 M_{\alpha\beta}}.
\]
By induction, we can obtain that for any $i_1,...,i_m\in \{0,1,2\}$, $m\geq3$
\[x_{i_1}...x_{i_m}\partial_{i_1}...\partial_{i_k}W|_{x^{(\alpha\beta)}}
\in \mathbb{Q}\{\lambda,\mu,x_3^{(\alpha\beta)},x_4^{(\alpha\beta)}\}.\]
So formally in the contribution $\text{Cont}_{\Gamma}\left(-\frac{1}{x_0^{(\alpha\beta)}},-\frac{1}{x_1^{(\alpha\beta)}},-\frac{1}{x_2^{(\alpha\beta)}}\right)$, the power of $\frac{1}{\lambda^2 L_{\alpha\beta}+\mu^2 M_{\alpha\beta}}$
is the number of edges $|E(\Gamma)|$. The remains contribute a homogenous polynomial of $\lambda,\mu, M_{\alpha\beta},L_{\alpha\beta}$ with degree $2\cdot|E(\Gamma)|+|V_{\text{intermal}}|=3\cdot|E(\Gamma)|-k$. Moreover, for $\Gamma\in G_{k,l}^{\geq3}$, the number of edges $|E(\Gamma)|$ is at most $3k$.
\end{proof}

\begin{remark}
From the expression of ${R_{1}}^{\alpha\beta}$, we see under the specialization of equivariant parameter $\lambda=0$ or $\mu=0$, the expression becomes  much simpler. Actually, it is not hard to prove  for any $k\geq1$
\[{(R_k)_{1}}^{\alpha,\beta}(q_1,q_2,\lambda,\mu)|_{\mu=0}
\in \frac{1}{(\lambda^2 L_{\alpha\beta})^{k}}\cdot \mathbb{Q}\left[M_{\alpha\beta},L_{\alpha\beta},\lambda^2\right]_{\text{deg}=2k}.\]
Under this specialization of equivariant parameter $\mu=0$, we can prove the contribution of each stable graph in theorem~\ref{thm:finit-gen} is independent of $\lambda$. Thus the total sum of  the non-equivariant limit of each stable graph contribution equals to the Gromov-Witten potential of local $\mathbb{P}^1\times\mathbb{P}^1$. The drawbacks here is in general $Cont_{\Gamma}\in \mathbb{Q}(q_1,\sqrt{q_2})$ not $\mathbb{Q}(q_1,q_2)$. Moreover${(R_k)_{1}}^{\alpha,\beta}(q_1,q_2,\lambda,\mu)|_{\mu=0}$ is not defined along $q_2=0$ since the appearance of the pole in the denominator.
\end{remark}
\begin{remark}
Another interesting specialization of equivariant parameter is  $\lambda=\mu$ or $\lambda=-\mu$, and in this case,  the $R$ matrix becomes very simple. But ${(R_k)_1}^{\alpha,\beta}|_{\mu=\lambda}$ is not defined for $\alpha\neq\beta$, since now $L_{\alpha\beta}|_{\mu=\lambda}+M_{\alpha\beta}|_{\mu=\lambda}=0$ giving pole on the denominator.
\end{remark}
\begin{remark}
In paper \cite{lho2019gromov}, the author chose another specialization of equivariant parameter $\mu=\sqrt{-1}\cdot\lambda$.  Under this specialization, our $R$ matrix still very complicated. The good things in this case is $R$ matrix is regular at $(q_1,q_2)=(0,0)$.
\end{remark}
\bibliographystyle{abbrv}

\begin{thebibliography}{10}

\bibitem{coates2009computing}
T.~Coates, A.~Corti, H.~Iritani, H.-H. Tseng, et~al.
\newblock Computing genus-zero twisted gromov-witten invariants.
\newblock {\em Duke Mathematical Journal}, 147(3):377--438, 2009.

\bibitem{coates2007quantum}
T.~Coates and A.~Givental.
\newblock Quantum riemann-roch, lefschetz and serre.
\newblock {\em Annals of mathematics}, pages 15--53, 2007.

\bibitem{coates2018gromov}
T.~Coates and H.~Iritani.
\newblock Gromov-witten invariants of local $\mathbb{P}^2$ and modular forms.
\newblock {\em arXiv preprint arXiv:1804.03292}, 2018.

\bibitem{Etingof02}
P.~Etingof.
\newblock Mathematical ideas and notions of quantum field theory.
\newblock Available at \url{ http://www-math.mit.edu/˜etingof/lect.ps}, 2002.

\bibitem{fang2019open}
B.~Fang, Y.~Ruan, Y.~Zhang, and J.~Zhou.
\newblock Open gromov--witten theory of $ k_{\mathbb{p}^2},
  k_{\mathbb{p}^1\times\mathbb{p}^1}, k_{\mathbb{wp}[1,1,2]}, k_{\mathbb{f}_1}$
  and jacobi forms.
\newblock {\em Communications in Mathematical Physics}, pages 1--45.

\bibitem{givental2001gromov}
A.~B. Givental'.
\newblock Gromov--witten invariants and quantization of quadratic hamiltonians.
\newblock {\em Moscow Mathematical Journal}, 1(4):551--568, 2001.

\bibitem{givental2001semisimple}
A.~B. Givental.
\newblock Semisimple frobenius structures at higher genus.
\newblock {\em International mathematics research notices},
  2001(23):1265--1286, 2001.

\bibitem{guo2018structure}
S.~Guo, F.~Janda, and Y.~Ruan.
\newblock Structure of higher genus gromov-witten invariants of quintic
  3-folds.
\newblock {\em arXiv preprint arXiv:1812.11908}, 2018.

\bibitem{lhogromov}
H.~Lho.
\newblock Gromov--witten invariants of calabi--yau manifolds with two
  k{\"a}hler parameters.
\newblock {\em International Mathematics Research Notices}.

\bibitem{lho2019gromov}
H.~Lho.
\newblock Gromov-witten invariants of calabi-yau fibrations.
\newblock {\em arXiv preprint arXiv:1904.10315}, 2019.

\bibitem{lho2018stable}
H.~Lho and R.~Pandharipande.
\newblock Stable quotients and the holomorphic anomaly equation.
\newblock {\em Advances in Mathematics}, 332:349--402, 2018.

\bibitem{li1998virtual}
J.~Li and G.~Tian.
\newblock Virtual moduli cycles and gromov-witten invariants of algebraic
  varieties.
\newblock {\em Journal of the American Mathematical Society}, 11(1):119--174,
  1998.

\bibitem{pandharipande2015relations}
R.~Pandharipande, A.~Pixton, and D.~Zvonkine.
\newblock Relations on $\overline{M}_{g,n}$ via 3-spin structures.
\newblock {\em Journal of the American Mathematical Society}, 28(1):279--309,
  2015.

\bibitem{teleman2012structure}
C.~Teleman.
\newblock The structure of 2d semi-simple field theories.
\newblock {\em Inventiones mathematicae}, 188(3):525--588, 2012.

\bibitem{wang2019quasi}
X.~Wang.
\newblock Quasi-modularity and holomorphic anomaly equation for the twisted
  gromov-witten theory: $\mathcal{O}(3)$ over $\mathbb{P}^2$.
\newblock {\em arXiv preprint arXiv:1906.11643}, 2019.

\end{thebibliography}

\end{document}